\newtheorem{ex}{Example}[subsection]
\newtheorem{nt}{Note}[subsection]
\newtheorem{thm}{Theorem}[section]
\newtheorem{defn}{Definition} [subsection]
\newcolumntype{d}[1]{D{.}{.}{#1}}
\begin{document}

\begin{frontmatter}                         

\hyphenpenalty
\exhyphenpenalty

\title{Total Ordering Defined on the set of all Intuitionistic Fuzzy Numbers } 

\runningtitle{Total Ordering Defined on the set of all Intuitionistic Fuzzy Numbers}

\author[A]{\fnms{V. Lakshmana Gomathi Nayagam} \snm{} }, 
\author[A]{\fnms{Jeevaraj. S} \snm{}\thanks{Corresponding author. Tel.:+91-9788868172;\\ E-mail: jeevanitt@gmail.com}}
and
\author[B]{\fnms{Geetha Sivaraman} \snm{}}
\runningauthor{Lakshmana Gomathi Nayagam et al.}
\address[A]{Department of Mathematics, National Institute of Technology Tiruchirappalli, India,  \\ E-mail: velulakshmanan@nitt.edu\\ }
\address[B]{Department of Mathematics, St.Joseph's College (Autonomous), Tiruchirappalli, India, \\Email:geedhasivaraman@yahoo.com}

\begin{abstract}
 L.A.Zadeh  introduced the concept of fuzzy set theory as the generalisation of classical set theory in 1965 and further it has been generalised to intuitionistic fuzzy sets (IFSs) by Atanassov in 1983  to model information by the membership, non membership and hesitancy degree more accurately than the theory of fuzzy logic.  The notions of intuitionistic fuzzy numbers in different contexts were studied in literature and applied in real life applications.  Problems in different fields involving qualitative, quantitative and uncertain information can be modelled better using  intutionistic fuzzy numbers introduced in \cite{13} which generalises intuitionistic fuzzy values \cite{1,7,13}, interval valued intuitionistic fuzzy number (IVIFN) \cite{9} than with usual IFNs \cite{Df,18,Ne}.  Ranking of fuzzy numbers have started in early seventies in the last century and a complete ranking on the class of fuzzy numbers have achieved by W.Wang and Z.Wang only on 2014. A complete ranking on the class of IVIFNs, using axiomatic set of membership, non membership, vague and precise score functions has been introduced and studied by Geetha et al.\cite{9}. In this paper, a total ordering on the class of IFNs \cite{13} using double upper dense sequence in the interval $[0,1]$ which generalises the total ordering on fuzzy numbers (FNs) is proposed and illustrated with examples. Examples are given to show the proposed method on this type of IFN is better than existing methods and this paper will give the better understanding over this new type of IFNs.
\end{abstract}

\begin{keyword}
Double upper dense sequence \sep total order relation \sep intuitionistic fuzzy number \sep interval Valued intuitionistic fuzzy number \sep trapezoidal intuitionistic fuzzy number.
\end{keyword}

\end{frontmatter}

\section{Introduction}
$~~~~~$Information system (IS) is a decision model used to select the best alternative from all the alternatives in hand under various attributes. The data collected from the experts may be incomplete or imprecise numerical quantities. To deal with such data, the theory of IFS provided by Atanassov \cite{1} aids better. In information system, dominance relation rely on ranking of data, ranking of intuitionistic fuzzy numbers is inevitable.

$~~~~~$ Many researchers have been working in the area of ranking of IFNs since last century. Different ranking methods for intuitionistic fuzzy values, interval valued intuitionistic fuzzy numbers have been studied in \cite{Ch,8,Dh,123,11,Lin,Ll,Li,Jw,25,Ju,Z,Hz}.  But till date, there exists no single method or combination of methods available to rank any two arbitrary IFNs.  The difficulty of defining total ordering on the class of  intuitionistic fuzzy numbers is that there is no effective tool to identify an arbitrarily given intuitionistic fuzzy number by finitely many real-valued parameters. In this work, by establising a new decomposition theorem for IFSs, any IFN can be identified by infinitely many but countable number of parameters. A new decomposition theorem for intuitionistic fuzzy sets is established by the use of an double upper dense sequence defined in $[0,1]$. Actually there are many double upper dense sequences available in the interval $[0,1]$. Since the choice of a double upper dense sequence is considered as the necessary reference systems for defining a complete ranking, infinitely many total orderings on the set of all IFNs can be well defined based on each choice of double upper dense sequence.  After introduction, some necessary fundamental knowledge on ordering and intuitionistic fuzzy numbers, interval valued intuitionistic fuzzy numbers is introduced in Section 2. In Section 3, a new decomposition theorem for intuitionistic fuzzy sets is establised using double upper dense sequence defined in the interval $[0,1]$. Section 4 is used to define total ordering on the set of all intuitionistic fuzzy numbers by using double upper dense sequence in the interval $[0,1]$. Several examples are given in Section 5 to show how the total ordering on IFNs can be used for ranking better than some other existing methods. Application of our proposed method in solving intuitionistic fuzzy information system problem is shown in section 5 by developing a new algorithm. Finally conclusions are given in section 6.
\subsection{Motivation}
The capacity to handle dubious and uncertain data is  more effectively done by stretching out intuitionistic fuzzy values to TrIFNs because the membership and non-membership degrees are better expressed  as\\ trapezoidal values rather than exact values. TrIFNs are generalisation of intuitionistic fuzzy values and IVIFNs.  As a generalisation, the set of TrIFNs should contain the set of all intuitionistic fuzzy values and IVIFNs. But the existing defintion for TrIFNs \cite{Df,18,Ne} does not contain the set of intuitionistic fuzzy values which means the existing definition for TrIFN is not the real generalisation of  intuitionistic fuzzy values. Hence the study about new structure for intuitionistic fuzzy number \cite{13} is essential. In the application point of view our proposed method is more applicable and more natural when it is compared  with the existing methodology.
More precisely,  
the existing definition for Trapezoidal intuitionistic fuzzy numbers (TrIFN) present in the literature \cite{Df,18,Ne} is defined as $A=\langle(a,b,c,d)(e,f,g,h)\rangle ~with~e \leq a \leq f \leq b \leq c \leq g \leq d \leq h$ does not generalise even the intuitionistic fuzzy value  of the kind $A=(a,c) ~with~ a+c \leq 1 ~and ~ a < c$. That is, if we write $A=(a,c)$ in trapezoidal intuitionistic form, we get $A=\langle(a,a,a,a)(c,c,c,c)\rangle ~with~a < c $ which contradicts the above definition for TrIFNs. So till today the real generalization of intuitionistic fuzzy values and  interval valued intuitionistic fuzzy numbers have not been studied in detail. This problem motivate us to study this type of intuitionistic fuzzy numbers \cite{13} and its ordering principles for ranking. 
\section{ Preliminaries}

Here we give a brief review of some preliminaries.

\begin{defn} (Atanassov \cite{1}). Let $X$ be a nonempty set. An intuitionistic fuzzy set (IFS) $A$ in $X$ is defined by $A= (\mu_{A}, \nu_{A})$, where $\mu_{A}:X \to [0,1]$ and   $\nu_{A}:X \to [0,1]$ with the conditions  $0 \leq \mu_{A}(x) + \nu_{A}(x) \leq 1, \forall x \in X$. The numbers $\mu_{A}(x), \nu_{A}(x) \in [0, 1]$ denote the degree of membership and non-membership of $x$ to lie in $A$ respectively. For each intuitionistic fuzzy subset $A$ in $X$,  $\pi_{A}(x) = 1 - \mu_{A}(x) - \nu_{A}(x)$ is called hesitancy degree of $x$ to lie in $A$. 
\end{defn}
\begin{defn} (Atanassov $\&$ Gargov, \cite{3}). Let $D[0,1]$ be the set of all closed subintervals of the interval $[0,1]$ . An interval valued intuitionistic fuzzy set on a set $X\neq \phi $ is an expression given by  $ A= \left\{ \left\langle x, \mu_A(x),\nu_A(x)\right\rangle:x \in X\right\}$ where \\ $\mu _A:X\rightarrow D[0,1],\nu _A:X\rightarrow D[0,1]$ with the condition $0<sup_x \mu_A (x)+sup_x \nu_A (x)\leq 1$.
\label{def1}
\end{defn}

The intervals $\mu_A (x)$ and $\nu_A (x)$ denote, respectively, the degree of belongingness and non-belongingness of the element $x$ to the set $A$. Thus for each $x\in X$, $\mu_A (x)$ and $\nu_A (x)$ are closed intervals whose lower and upper end points are, respectively, denoted by $\mu_{A_L} (x)$, $\mu_{A_U} (x)$ and $\nu_{A_L} (x)$, $\nu_{A_U} (x)$. We denote $ A=  \{ \left\langle x, [\mu_{A_L}(x),\mu_{A_U}(x)], [\nu_{A_L}(x),\nu_{A_U}(x)]\right\rangle:\\x \in X \}$ where $0<\mu_A(x) + \nu_A(x)\leq1$.

For each element $x \in X$ , we can compute the unknown degree (hesitance degree) of belongingness $\pi_A (x)$ to $A$ as $\pi _A(x)=1-\mu _A(x)-\nu _A(x)=[1-\mu _{A_U}(x)-\nu _{A_U}(x),1-\mu _{A_L}(x)-\nu _{A_L}(x)]$.
We denote the set of all IVIFSs in $X$ by IVIFS($X$). An IVIF value is denoted by $A=([a,b],[c,d])$ for convenience.
\begin{defn}
(Atanassov $\&$ Gargov, \cite{3}).
The complement $A^c$ of $A  = \left\langle  x, \mu_A (x), \nu _A (x) :x \in X\right\rangle$ is given by $A^c = \left \langle  x, \nu_A (x), \mu _A (x) :x \in X \right \rangle$.
\end{defn}
\begin{defn} (Lakshmana et.al \cite{13}).
 An intuitionis-\\tic fuzzy set (IFS) $A = (\mu_A, \nu_A)$ of $R$ is said to be an intuitionistic fuzzy number if
$\mu_A$ and $\nu_A$ are fuzzy numbers. Hence  $A = (\mu_A, \nu_A)$ denotes an intuitionistic fuzzy
number if $\mu_A$ and $\nu_A$ are fuzzy numbers with $\nu_A \leq {\mu_A}^{c}$,
where ${\mu_A}^{c}$ denotes the complement of $\mu_A$.
\label{def2}
\end{defn}

An intuitionistic fuzzy number \\$A=\{(a, b_1, b_2, c), (e, f_1, f_2, g) \}$ with $(e, f_1, f_2, g)\leq (a, b_1, b_2, c)^{c}$ is shown in fig(1).
\begin{figure}[ht]
\centering
\scalebox{0.6}{
\resizebox{8cm}{6cm}{\includegraphics{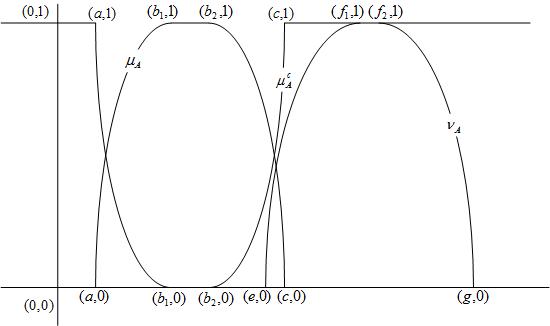}}}
\caption {Intuitionistic Fuzzy Number}
\end{figure}

\begin{defn}(Lakshmana et.al \cite{13})
A trapezoidal intuitionistic fuzzy number $A$ is defined by $A=\{ (\mu_{A}, \nu_{A}) \mid x \in R \}$, where $\mu_{A}$ and $\nu_{A}$ are trapezoidal fuzzy numbers with $ \nu_{A}(x) \leq \mu_{A}^{c}(x) $. \\We note that the condition $(e, f_1, f_2, g)\leq (a, b_1, b_2, c)^{c}$ of the trapezoidal intuitionistic fuzzy number $A = \{(a, b_1, b_2, c), (e, f_1, f_2, g) \}$ where 
$(a, b_1, b_2, c)$ and $(e, f_1, f_2, g)$ are membership and nonmembership fuzzy numbers of $A$ with  either $e \geq b_2$ and $f_1 \geq c$ or $f_2 \leq  a$ and $g \leq b_1$ on the legs of trapezoidal intuitionistic fuzzy number. 
\label{def3} 
\end{defn}

A trapezoidal intuitionistic fuzzy number $A=\{(a, b_1, b_2, c), (e, f_1, f_2, g) \}$ with $e \geq b_2$ and $f_1 \geq c$ is shown in fig(2).

\begin{figure}[ht]
\centering
\scalebox{0.6}{
 \resizebox{8cm}{6cm}{\includegraphics{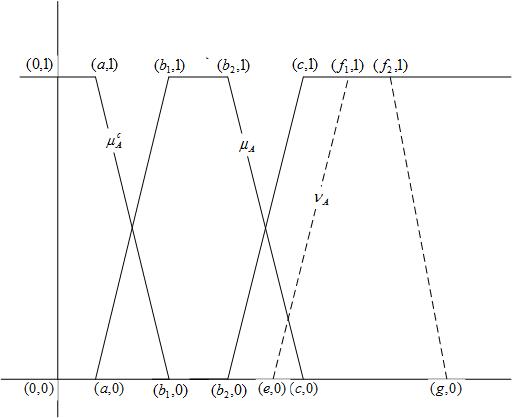}}
}

\caption {Trapezoidal Intuitionistic Fuzzy Number $A=\{(a, b_1, b_2, c), (e, f_1, f_2, g) \}$}
\label{f2}
\end{figure}
\begin{defn}
(Lakshmana et al., \cite{12})
For any IVIFN $A=([a,b],[c,d])$, the membership score function is defined as $L(A)=\frac{a+b-c-d+ac+bd }{2}  $.
\end{defn}
\begin{defn}
(Geetha et al., \cite{9})
For any IVIFN $A=([a,b],[c,d])$, the non-membership score function is defined as $LG(A)=\frac{-a-b+c+d+ac+bd }{2}  $.
\end{defn}
\begin{defn}
(Geetha et al., \cite{9})
For any IVIFN $A=([a,b],[c,d])$, the vague score function is defined as $P(A)=\frac{a-b-c+d+ac+bd }{2}  $. 
\end{defn}
\begin{defn}
(Geetha et al., \cite{9})
For any IVIFN $A=([a,b],[c,d])$, the imprecise score function is defined as $IP(A)=\frac{-a+b-c+d-ac+bd }{2}  $. 
\end{defn}
\begin{defn} Let $X$ be a non-empty set. Any subset of the cartesian product $X \times X$ is called a relation, denoted by $R$, on $X$. We write $aRb$ iff $(a,b) \in R$. A relation $R$ is called a partial ordering on $X$ if it is reflexive, antisymmetric, and transitive. A partial ordering $R$ on X is called total ordering if either $aRb$ or $bRa$ for any $a,b \in X$. Two total orderings $R_1 ~and~ R_2$ are different iff there exist $a,b \in X$ with $a \neq b$ such that $a R_1 b$ but $b R_2 a$ or $a  R_2 b$ but $b  R_1 a$. For any given total ordered infinite set, there are infintely many ways to redefine a new total ordering on it. A relation $ R$ is called an equivalence relation if it is reflexive, symmetric and transitive.
\end{defn}
\section{Total Ordering defined on Intuitionistic Fuzzy Number}

 $~~~~~$Geetha et.al \cite{9} have acheived the total ordering on the set of IVIFN using membership, nonmembership, vague and precise score functions. Let us recall the total ordering defined  by Geetha et.al \cite{9} on IVIFN. \\ Let $A=([a_1,b_1],[c_1,d_1]) , B = ([a_2,b_2],[c_2,d_2]) \in IVIFN$. The total ordering $\leq$ on IVIFN may be defined to one of the following criterion:\\
$(1).~ L(A)<L(B)$, or\\
$(2). ~ L(A)=L(B) ~but~ -LG(A)<-LG(B)$, or\\
$(3). ~ L(A)=L(B) ~and~ LG(A)=LG(B) ~but~ P(A)<P(B) $, or\\
$(4). ~ L(A)=L(B) , LG(A)=LG(B) ~and~ P(A)=P(B) ~but~ -IP(A) \leq -IP(B)$. \\
The above way of defining a total ordering is often referred to as lexicographic in literature \cite{15}. 
\\ In this section a new total ordering is defined in the set of all intuitionistic fuzzy numbers using the above ordering and a new decomposition theorem on intuitionistic fuzzy sets which is given in the following subsection.
\subsection{Upper Dense sequence in $(0,1]$ }
 $~~~~~$ The major difficulty in defining total ordering on the set of all fuzzy numbers is that there is no effective tool to identify an arbitrarily given fuzzy number by only finitely many real-valued parameters. To over come this difficulty, W.Wang, Z.Wang \cite{34} has introduced the concept of upper dense sequence in the interval $(0,1]$, and defined the total ordering on the set of all fuzzy numbers using this sequence. This upper dense sequence gives values for $\alpha$ in the $\alpha$-cut of FNs. But for IFNs, two sequences are needed to give values for $\alpha ~\&~ \beta$ in the $(\alpha, \beta)$-cut where $\alpha \in (0,1] ~\&~ \beta \in [0,1) $. For convenience upper dense sequence of $\alpha$ is denoted by $S_1=\left\{{{\alpha_i}| i=1,2,...}\right\}$ and upper dense sequence of $\beta$ is denoted by $S_2=\left\{{{\beta_i}| i=1,2,...}\right\}$. In this section, the upper dense sequence in $(0,1]$ is briefly reviewed.

\begin{defn}(W.Wang, Z.Wang, \cite{34})
A sequence $S = \left\{\alpha_i|i=1,2,...\right\}\subset (0,1]$ is said to be upper dense in $(0,1]$ if, for every point $x \in (0,1]$ and $\epsilon >0$, there exists some $ \alpha_i \in S$ such that $\alpha_i \in [x, x+\epsilon)$ for some $i$. A sequence $S \subset [0,1)$ is said to be lower dense in $[0,1)$ if, for every point $x \in [0,1)$ and any $\epsilon >0$, there exists $ \alpha_i \in S$ such that $\alpha_i \in (x-\epsilon , x]$ for some $i$. 
\label{defi}
\end{defn}
$~~~~~$From definition \ref{defi} we note that, any upper dense sequence in $(0,1]$ is nothing but a dense sequence with real number 1 and it is also a lower dense sequence.
\begin{defn} Let $S_1=\left\{ \alpha_i | i=1,2, ... \right\}$  and $S_2=\left\{ \beta_i | i=1,2, ... \right\}$ be two upper dense sequences in $(0,1]$ then the double upper dense sequence is  defined as $S=(S_1,S_2)=\left\{( \alpha_i, \beta_i ) | i=1,2, ... \right\}$.
\end{defn}
Example for an upper dense sequence and double upper dense sequence in $(0,1]$ is given as follows.
\begin{ex}
Let $S_1=\left\{ \alpha_i | i=1,2... \right\} $ be the set of all rational numbers in $(0,1]$, where $\alpha_1=1,\alpha_2=\frac{1}{2},\alpha_3=\frac{1}{3}, \alpha_4=\frac{2}{3}, \alpha_5=\frac{1}{4}, \alpha_6=\frac{3}{4}, \alpha_7=\frac{1}{5}, \alpha_8=\frac{2}{5}, \alpha_9=\frac{3}{5}, \alpha_{10}=\frac{4}{5},...$. \\ Then sequence $S_1$ is an upper dense sequence in $(0,1]$. If we allow a number to have multiple occurences in the sequence , the general members in upper dense sequence $S_ \beta=\left\{ s_{\beta_i}| i=1,2,...\right\}$ can be expressed by  $s_{\beta_i}=( \frac{i}{k}-\frac{k-1}{2}), i=1,2,...$ where $k=\left\lceil \sqrt{2i+\frac{1}{4}}-\frac{1}{2}\right\rceil$. That is, $s_{\beta_1}=1,~s_{\beta_2}=\frac{1}{2},~s_{\beta_3}=1,~s_{\beta_4}=\frac{1}{3},~s_{\beta_5}=\frac{2}{3},~s_{\beta_6}=1,~s_{\beta_7}=\frac{1}{4},~s_{\beta_8}=\frac{2}{4},~s_{\beta_2}=\frac{3}{4},...$. In sequence $S_{\beta}$, for instance, $s_{\beta_3}$ is the same real number as $s_{\beta_1}$. \label{exg}
\end{ex}
\begin{ex}
 Consider $S_1$ as in example \ref{exg} Let $S=(S_1,S_2)= \left\{ (\alpha_i, \beta_i) | \alpha_i= \beta_i,\alpha_i \in S_1 \right\}\\ =  \{ (1,1),(1/2,1/2),(1/3,1/3),(2/3,2/3), \\(1/4,1/4),(3/4,3/4),(1/5,1/5),... \} $. Clearly $S$ is a double upper dense sequence in $(0,1]$.
\label{eg}
\end{ex}
In the forthcoming sections this double upper dense sequence will be very useful for defining total orderings on the set of IFNs.

\subsection{Decomposition theorem for intuitionistic fuzzy number using upper dense sequence}
 $~~~~~$In this section, a new decomposition theorem for IFSs is established using the double upper dense sequence defined in $(0,1]$. Before establising a new decomposition theorem for intuitionistic fuzzy sets, it is needed for us to define decompostion theorems for intuitionistic fuzzy sets using special $\alpha-\beta$ cuts.
\begin{defn}
Let $X$ be a nonempty universal set and $A=\left(\mu _A,\nu _A\right)$ be an intuitionistic fuzzy set of $X$ with membership function $\mu _A$ and with a nonmembership function $\nu_A$. Let $\alpha, \beta \in (0, 1]$   
\begin{enumerate}
	\item The $\alpha - \beta$ cut, denoted by $^{\alpha - \beta}A$ is defined by $^{(\alpha - \beta)}A = ~^{\alpha}{\mu_A} \times ~^{\beta}{\nu_A}$ where $^{\alpha}{\mu_A} = \{x \in X|\mu_{A}(x)\geq \alpha\}$ and $^{\beta}{\nu_A} = \{x \in X|\nu_{A}(x)\geq \beta\}$. Equivalently $^{(\alpha - \beta)}A = \mu_{A}^{-1}([\alpha, 1]) \times \nu_{A}^{-1}([ \beta,1])$ is a subset of $ \wp(X) \times \wp(X)$.
	\item The  strong alpha beta cut, denoted by ${^{(\alpha - \beta)+}}A$ is defined by ${^{(\alpha - \beta)+}}A = ^{\alpha+}{\mu_A} \times ^{\beta+}{\nu_A}$ where $^{\alpha+}{\mu_A} = \{x \in X|\mu_{A}(x)>\alpha\}$ and $~^{\beta+}{\nu_A} = \{x \in X|\nu_{A}(x)>\beta\}$. Equivalently ${^{(\alpha - \beta)+}}A = \mu_{A}^{-1}((\alpha, 1]) \times \nu_{A}^{-1}(( \beta,1])$ is a subset of $\wp(X) \times \wp(X)$.
	\item The level set of $A$, denoted by $L(A)$ is defined by $L(A) = L_{\mu_A} \times L_{\nu_A}$ where \\$L_{\mu_A}=\left\{ \alpha | \mu_A(x) = \alpha ,~x \in X \right\} $ and \\$L_{\nu_A}=\left\{ \beta | \nu_A(x) = \beta ,~x \in X \right\} $ which is a subset of $\wp([0, 1]) \times \wp([0, 1])$. 
\end{enumerate}
\end{defn} 
 \begin{ex}
Let $A=\langle (0.17,0.3,0.47,0.56),\\(0.05,0.13,0.16,0.23) \rangle$ be a trapezezoidal intuitionistic fuzzy number.
Then $^{(\alpha - \beta)}A =([0.17+(0.3-0.17)\alpha, 0.56-(0.56-0.47)\alpha],[0.05+(0.13-0.05)\beta, 0.23-(0.23-0.16)\beta]) =([0.17+0.13\alpha, 0.56-0.09\alpha],[0.05+0.08\beta, 0.23-0.07\beta]),\forall \alpha,\beta \in [0,1]$.\\
${^{(\alpha - \beta)+}}A =((0.17+(0.3-0.17)\alpha, 0.56-(0.56-0.47)\alpha),(0.05+(0.13-0.05)\beta, 0.23-(0.23-0.16)\beta)) =((0.17+0.13\alpha, 0.56-0.09\alpha),(0.05+0.08\beta, 0.23-0.07\beta)),\forall \alpha,\beta \in [0,1]$.\\
$L_{\mu_A} =(\{{0.17+(0.3-0.17)\alpha, 0.56-(0.56-0.47)\alpha}\},\\ \{{0.05+(0.13-0.05)\beta, 0.23-(0.23-0.16)\beta)}\} \\=(\left\{{0.17+0.13\alpha, 0.56-0.09\alpha}\right\},\\ \{{0.05+0.08\beta, 0.23-0.07\beta}\}) ,\forall \alpha,\beta \in [0,1]$.
\end{ex}
One way of representing a fuzzy set is by special fuzzy sets on $\alpha$-cuts and another way of representing a fuzzy set is by special fuzzy sets on Strong $\alpha$-cuts. As a generalisaton of fuzzy sets, any intuitionistic fuzzy set can also be represented by the use of special intuitionistic fuzzy sets on $\alpha - \beta$ cuts and special intuitionistic fuzzy sets on strong $\alpha - \beta $ cuts.

\begin{defn}
The special intuitionistic fuzzy set $_{(\alpha,\beta)}{A}=\left(_{\alpha}{\mu_A},_{\beta}{\nu_A}\right)$ is defined by its membership ($_{\alpha}{\mu_A}$) and non-membership function ($_{\beta}{\nu_A}$) as follows,

$_{\alpha}{\mu_A}(x)=\begin{cases} \text\ \ \ \alpha ~ ~~~~~~~~~~,~ x \in ^{\alpha}{\mu_A}
\text\ \ \\~ \  \\~~\ 0 ~ ~~~~~~~~~~~ ,~otherwise \end{cases} 
$\\
$_{\beta}{\nu_A}(x)=\begin{cases} \text\ \ \ \beta ~ ~~~~~~~~~~,~ x \in ^{\beta}{\nu_A}
\text\ \ \\~ \  \\~~\ 0 ~ ~~~~~~~~~~~ ,~otherwise \end{cases} 
$. \\
\label{dfen1}
\end{defn}
The following decomposition theorems will show the representation of an arbitrary IFS in terms of the special IFSs $_{(\alpha,\beta)}{A}$. 
\begin{thm} 
{\bf First Decomposition theorem of an IFS}:
Let $X$ be a non-empty set. For an intuitionistic fuzzy subset $A=\left(\mu_A , \nu_A \right)$ in $X$, \\$A=\left({\bigcup _{\alpha  \in [0,1]}} ~~ _{\alpha } \mu_A , {\bigcup_{\beta  \in [0,1]}} _{\beta } \nu_A\right)$, where $\bigcup $ is standard union.
\label{CS1}
\end{thm}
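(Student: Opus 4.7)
The plan is to reduce the claim to the classical (fuzzy-set) decomposition theorem applied independently to each of the two components $\mu_A$ and $\nu_A$, since the IFS $A=(\mu_A,\nu_A)$ is, by Definition 1, just an ordered pair of functions $X\to[0,1]$ and the union $\bigcup$ written in the statement is the standard (pointwise supremum) union of fuzzy sets acting componentwise. Thus it suffices to prove the two pointwise identities
\[
\mu_A(x)=\sup_{\alpha\in[0,1]}{}_{\alpha}\mu_A(x),\qquad
\nu_A(x)=\sup_{\beta\in[0,1]}{}_{\beta}\nu_A(x),\qquad \forall x\in X,
\]
and then reassemble the pair.

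First I would fix an arbitrary $x\in X$ and set $a=\mu_A(x)\in[0,1]$. Using Definition \ref{dfen1}, I would split the range of $\alpha$ into two cases. For $\alpha\le a$ one has $\mu_A(x)=a\ge\alpha$, hence $x\in{}^{\alpha}\mu_A$, so ${}_{\alpha}\mu_A(x)=\alpha$. For $\alpha>a$ one has $\mu_A(x)<\alpha$, hence $x\notin{}^{\alpha}\mu_A$, so ${}_{\alpha}\mu_A(x)=0$. Taking the supremum over $\alpha\in[0,1]$ therefore gives $\sup_{\alpha}{}_{\alpha}\mu_A(x)=\sup\{\alpha:\alpha\le a\}=a=\mu_A(x)$, as required. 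The boundary case $a=0$ is covered by the convention that the supremum of $\{0\}$ is $0$.

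Next I would repeat the identical argument verbatim for the non-membership component, replacing $\mu_A$ by $\nu_A$, $\alpha$ by $\beta$, and $a$ by $b=\nu_A(x)$, to conclude $\sup_{\beta}{}_{\beta}\nu_A(x)=\nu_A(x)$. Since equality of IFSs is equality of the two coordinate fuzzy sets, the pair identity
\[
A=(\mu_A,\nu_A)=\Bigl(\bigcup_{\alpha\in[0,1]}{}_{\alpha}\mu_A,\ \bigcup_{\beta\in[0,1]}{}_{\beta}\nu_A\Bigr)
\]
follows. I would also briefly remark that the intuitionistic constraint $\mu_A(x)+\nu_A(x)\le 1$ is preserved trivially, because the right-hand side reconstructs $\mu_A$ and $\nu_A$ exactly rather than modifying them.

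There is no genuine obstacle in this proof; the only delicate point worth flagging explicitly is the interpretation of the symbol $\bigcup$ as pointwise supremum of fuzzy sets applied coordinatewise to the pair $(\mu_A,\nu_A)$, rather than as Atanassov's IFS-union (which combines $\max$ on memberships with $\min$ on non-memberships). Once this convention is stated, the argument is a direct two-line case analysis on $\alpha$ versus $\mu_A(x)$ and on $\beta$ versus $\nu_A(x)$, exactly mirroring the classical decomposition theorem for ordinary fuzzy sets.
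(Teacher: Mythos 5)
Your proposal is correct and follows essentially the same argument as the paper: fix $x$, set $a=\mu_A(x)$ and $b=\nu_A(x)$, split the supremum over $\alpha\le a$ versus $\alpha>a$ (and $\beta\le b$ versus $\beta>b$), and conclude componentwise that the suprema equal $a$ and $b$. Your explicit remark that $\bigcup$ means the coordinatewise pointwise supremum (not Atanassov's IFS union) is a sensible clarification that the paper leaves implicit.
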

\begin{proof}
Let  $x$ be an arbitrary element in $ X$ and  let $\mu_A (x)=a~\&~ \nu_A (x)=b$. \\ Then $\left( \left({\bigcup _{\alpha  \in [0,1]}} ~~ _{\alpha } \mu_A \right) (x) , \left({\bigcup_{\beta  \in [0,1]}} ~~ _{\beta } \nu_A\right)(x)\right)=\left( Sup_{\alpha  \in [0,1]} ~~ _{\alpha } \mu_A  (x) , Sup_{\beta  \in [0,1]} ~~ _{\beta } \nu_A(x)\right)$=$\\( max \left[Sup_{\alpha  \in [0,a]} ~~ _{\alpha } \mu_A (x) ,Sup_{\alpha  \in (a,1]} ~~ _{\alpha } \mu_A (x)\right],\\ max \left[ Sup_{\beta  \in [0,b]} ~~ _{\beta } \nu_A (x) , Sup_{\beta  \in (b,1]} ~~ _{\beta } \nu_A (x) \right] )$.
\\For each $\alpha \in [0,a]$, we have $\mu _A (x)=a \geq \alpha$, therefore $_\alpha \mu _A (x)=\alpha$. On the other hand, for each $\alpha \in (a,1]$, we have $\mu_A (x) =a < \alpha$ and $_\alpha \mu _A (x)=0$.\\
Similarly, for each $\beta \in [0,b]$, we have $\nu _A (x)=b \geq \beta$, therefore $_\beta \nu _A (x)=\beta$. On the other hand, for each $\beta \in (b,1]$, we have $\nu_A (x) =b < \beta$ and $_\beta \nu _A (x)=0$. Therefore $\left( \left({\bigcup _{\alpha  \in [0,1]}} ~~ _{\alpha } \mu_A \right) (x) , \left({\bigcup_{\beta  \in [0,1]}} ~~ _{\beta } \nu_A\right)(x)\right)$\\=$\left( Sup _{\alpha  \in [0,a]} ~~  \alpha ,Sup_{\beta  \in [0,b]} ~~ \beta \right)$= $(a,b)=$\\$\left(\mu_A (x), \nu_A (x)\right)$. Hence the theorem.
\end{proof}
To illustrate the above theorem, let us consider a trapezoidal intuitionistic fuzzy number  $A$ as in figure \ref{c1}.

For each $\alpha, \beta \in [0,1]$, the $\alpha - \beta$ cut of $A=\{(a, b_1, b_2, c), (e, f_1, f_2, g) \}$  is given by
$^{(\alpha - \beta)}A =([a+(b_1 -a)\alpha, c-(c-b_2)\alpha],[e+(f_1 -e )\beta, g-(g-f_2)\beta])$ and the special intuitionistic fuzzy set $_{(\alpha,\beta)}{A}$ employed in definition \ref{dfen1} is defined by its membership ($_{\alpha}{\mu_A}$) and non-membership function ($_{\beta}{\nu_A}$) as follows\\
$_{\alpha}{\mu_A}(x)=\begin{cases} \text\ \alpha ,~ x \in [a+(b_1 -a)\alpha, c-(c-b_2)\alpha]
\text\ \ \\~ \  \\\ 0,~otherwise \end{cases} 
$ \\
$_{\beta}{\nu_A}(x)=\begin{cases} \text\  \beta ,~ x \in [e+(f_1 -e )\beta, g-(g-f_2)\beta]
\text\ \ \\~ \  \\\ 0 ,~otherwise \end{cases} 
$ \\

Examples of sets $^{\alpha}{\mu_A}$, $^{\beta}{\nu_A}$, $_{\alpha}{\mu_A}$ and $_{\beta}{\nu_A}$ for three values of $\alpha$ (namely $\alpha_1,\alpha_2,\alpha_3$) and $\beta$ (namely $\beta_1,\beta_2,\beta_3$) are shown in figure \ref{c1}. 
\begin{figure}[h!]
\centering
\scalebox{0.8}{
 \resizebox{7.5cm}{8cm}{\includegraphics{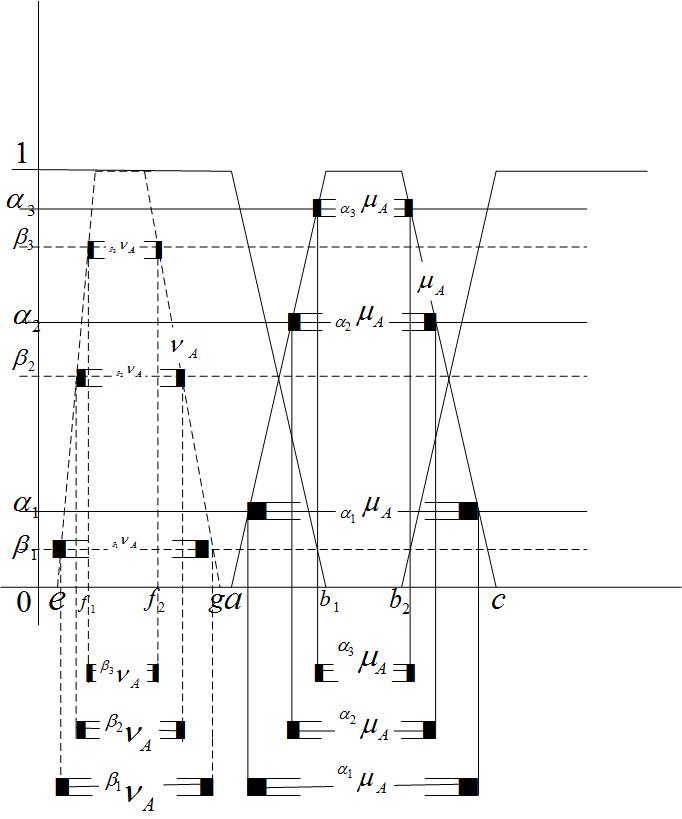}}
}
\caption {Illustration of First Decomposition Theorem}
\label{c1}
\end{figure} \\According to theorem \ref{CS1}, $A$ is obtained by taking the standard fuzzy union of sets ($_{\alpha}{\mu_A}$,  $_{\beta}{\nu_A}$) for all $\alpha , \beta \in [0,1]$.
\begin{thm} 
{\bf Second Decomposition theorem of an IFS}:
Let $X$ be a non-empty set. For an intuitionistic fuzzy subset $A$ in $X$, \\$A=\left({\bigcup _{\alpha  \in [0,1]}} ~~ _{\alpha+ } \mu_A , {\bigcup_{\beta  \in [0,1]}} ~~ _{\beta+ } \nu_A\right)$,  where $\bigcup $ is standard union.
\label{SDT2}
\end{thm}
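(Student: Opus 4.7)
The plan is to imitate the argument of Theorem \ref{CS1} almost verbatim, but adapted to strong cuts. Fix an arbitrary $x \in X$ and set $\mu_A(x)=a$ and $\nu_A(x)=b$. Since the right-hand side is a standard (pointwise) union of fuzzy sets, evaluation at $x$ reduces to taking suprema, giving
$$\left(\left(\bigcup_{\alpha\in[0,1]} {}_{\alpha+}\mu_A\right)(x),\; \left(\bigcup_{\beta\in[0,1]} {}_{\beta+}\nu_A\right)(x)\right) = \left(\sup_{\alpha\in[0,1]} {}_{\alpha+}\mu_A(x),\; \sup_{\beta\in[0,1]} {}_{\beta+}\nu_A(x)\right).$$
So the task is to show each of these two suprema equals $a$ and $b$ respectively, which will immediately yield $(\mu_A(x),\nu_A(x))$.

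Next I would analyze $ {}_{\alpha+}\mu_A(x)$ by cases, using that strong-cut membership requires the strict inequality $\mu_A(x)>\alpha$. For $\alpha\in[0,a)$, we have $a>\alpha$, so $x\in {}^{\alpha+}\mu_A$ and hence ${}_{\alpha+}\mu_A(x)=\alpha$. For $\alpha\in[a,1]$, we have $a\le\alpha$, so $x\notin {}^{\alpha+}\mu_A$ and hence ${}_{\alpha+}\mu_A(x)=0$. Splitting the outer sup accordingly,
$$\sup_{\alpha\in[0,1]} {}_{\alpha+}\mu_A(x) = \max\!\left(\sup_{\alpha\in[0,a)}\alpha,\;\sup_{\alpha\in[a,1]}0\right) = a.$$
The identical case split, with $\beta$ and $b$ in place of $\alpha$ and $a$, delivers $\sup_\beta {}_{\beta+}\nu_A(x)=b$, completing the computation.

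The only subtlety worth flagging is the degenerate case $a=0$ (respectively $b=0$). Then the interval $[0,a)=\emptyset$, and its supremum needs to be interpreted as $0$ by the usual convention for suprema of empty subsets of $[0,1]$ in the fuzzy-set setting (so that the pointwise union of the empty family is the zero function). With that convention in force, $\sup_{\alpha\in[0,0)}\alpha=0=a$, and the argument above goes through uniformly. I expect no genuine obstacle beyond this bookkeeping; the content is essentially that replacing the closed cut $[0,a]$ by the half-open interval $[0,a)$ changes whether the supremum is attained but not its value, since $a$ is the supremum of every set of the form $[0,a)$ with $a>0$.
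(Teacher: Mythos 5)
Your proposal is correct and follows essentially the same argument as the paper's proof: evaluate the standard union pointwise as a supremum, split at $a$ (resp.\ $b$) so that ${}_{\alpha+}\mu_A(x)=\alpha$ on $[0,a)$ and $0$ on $[a,1]$, and conclude $\sup_{\alpha\in[0,a)}\alpha=a$. Your explicit handling of the degenerate case $a=0$ (empty strong-cut interval, supremum convention $0$) is a small point of extra care that the paper leaves implicit, but it does not change the route.
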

\begin{proof}
Let  $x$ be an arbitrary element in $ X$ and  let $\mu_A (x)=a~\&~ \nu_A (x)=b$.\\ Then $\left( \left({\bigcup _{\alpha  \in [0,1]}} _{\alpha+ } \mu_A \right) (x) , \left({\bigcup_{\beta  \in [0,1]}} _{\beta +} \nu_A\right)(x)\right)=\left( Sup_{\alpha  \in [0,1]} ~~ _{\alpha +} \mu_A  (x) , Sup_{\beta  \in [0,1]} ~~ _{\beta +} \nu_A(x)\right)$\\=$( max \left[Sup_{\alpha  \in [0,a)} ~ _{\alpha +} \mu_A (x) ,Sup_{\alpha  \in [a,1]} ~ _{\alpha +} \mu_A (x)\right], \\ max \left[ Sup_{\beta  \in [0,b)} ~~ _{\beta+ } \nu_A (x) , Sup_{\beta  \in [b,1]} ~~ _{\beta +} \nu_A (x) \right] )$.
\\For each $\alpha \in [0,a)$, we have $\mu _A (x)=a > \alpha$, therefore $_{\alpha +} \mu _A (x)=\alpha$. On the other hand, for each $\alpha \in [a,1]$, we have $\mu_A (x) =a \leq \alpha$ and $_{\alpha + } \mu _A (x)=0$.\\
Similarly, for each$\beta \in [0,b)$, we have $\nu _A (x)=b > \beta$, therefore $_{\beta +} \nu _A (x)=\beta$. On the other hand, for each $\beta \in [b,1]$, we have $\nu_A (x) =b \leq \beta$ and $_{\beta +} \nu _A (x)=0$. Therefore \\$\left( \left({\bigcup _{\alpha  \in [0,1]}} ~~ _{\alpha + } \mu_A \right) (x) , \left({\bigcup_{\beta  \in [0,1]}} ~~ _{\beta +} \nu_A \right)(x)\right)$=\\$\left( Sup _{\alpha  \in [0,a)} \alpha ,Sup_{\beta  \in [0,b)}  \beta \right)$= $(a,b)=\left(\mu_A (x), \nu_A (x)\right)$. Hence the theorem.
\end{proof}

\begin{thm}
{\bf Third Decomposition theorem of an IFS}:
Let $X$ be a non-empty set. For an intuitionistic fuzzy subset $A$ in $X$,\\ $A=\left({\bigcup _{\alpha  \in L_{\mu_A}}} ~~ _{\alpha } \mu_A , {\bigcup_{\beta  \in L_{\nu_A}} }~~ _{\beta } \nu_A\right)$, where $\bigcup $ is standard union .
\end{thm}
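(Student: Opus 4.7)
The plan is to mirror the proof of Theorem~\ref{CS1} and exploit the fact that every attained value of $\mu_A$ (respectively $\nu_A$) automatically lies in the level set $L_{\mu_A}$ (respectively $L_{\nu_A}$). So the restriction of the index set from $[0,1]$ to the level set discards only values of $\alpha$ that are not attained, and at each fixed point $x$ those discarded values contribute $0$ to the supremum anyway.

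Concretely, I would fix an arbitrary $x \in X$ and set $a = \mu_A(x)$ and $b = \nu_A(x)$. By the very definition of $L_{\mu_A}$ and $L_{\nu_A}$ we have $a \in L_{\mu_A}$ and $b \in L_{\nu_A}$, so the index sets are nonempty and, crucially, contain the target values themselves. Then, exactly as in Theorem~\ref{CS1}, the standard union evaluates pointwise as a supremum:
\[
\Bigl(\bigcup_{\alpha \in L_{\mu_A}} {}_{\alpha}\mu_A\Bigr)(x) = \sup_{\alpha \in L_{\mu_A}} {}_{\alpha}\mu_A(x).
\]
Splitting $L_{\mu_A}$ into $\{\alpha \in L_{\mu_A} : \alpha \le a\}$ and $\{\alpha \in L_{\mu_A} : \alpha > a\}$, the definition of $_{\alpha}\mu_A$ gives $_{\alpha}\mu_A(x)=\alpha$ on the first subset (since $\mu_A(x)=a\ge \alpha$) and $_{\alpha}\mu_A(x)=0$ on the second. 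Because $a$ itself lies in the first subset, the supremum equals $a$. An identical argument with $L_{\nu_A}$ yields $\bigl(\bigcup_{\beta \in L_{\nu_A}} {}_{\beta}\nu_A\bigr)(x) = b$. Therefore the right-hand pair evaluates to $(\mu_A(x), \nu_A(x))$ at every $x$, which is the desired identity.

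I do not expect a genuine obstacle. The only subtlety, compared with the first decomposition theorem, is ensuring that shrinking the index of the union from $[0,1]$ to the level set does not lose the value $a$ (or $b$) from the attainable supremum; this is precisely handled by the observation that $\mu_A(x) \in L_{\mu_A}$ by the definition of the level set. Everything else reduces to the same case analysis used in the proof of Theorem~\ref{CS1}.
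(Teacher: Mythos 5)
Your proof is correct and is essentially the argument the paper intends: the paper omits details, saying only that the proof is "similar to Theorem \ref{SDT2}", and your pointwise-supremum computation, together with the key observation that $\mu_A(x)\in L_{\mu_A}$ and $\nu_A(x)\in L_{\nu_A}$ so the supremum is actually attained, is exactly the standard way to fill that in. (Only your motivating remark that discarded indices "contribute $0$" is loose — unattained $\alpha\le a$ contribute $\alpha$, not $0$ — but this does not affect the proof body, which bounds them by $a$ and notes $a$ itself is in the index set.)
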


\begin{proof}
The proof of this theorem is similar to the Theorem \ref{SDT2}. 
\end{proof}
$~~~~~$Regarding IFN as special intuitionistic fuzzy subset (IFS) of $\Re$, these decomposition theorems are also available for IFNs. Since they identify an intuitionistic fuzzy number by uncountably infinite real valued parameters generally and therefore\\ lexicography can not be used anymore, unfortunately none of them can be used to define a total ordering on the set of IFNs. Thus, establising a new decomposition theorem, which identifies any IFN by only countably many real valued parameters is essential.
\begin{thm} \textbf{Fourth Decomposition theorem of an IFS:}
Let $A=\left(\mu_A,\nu_A\right)$ be an intuitionistic fuzzy subset of $X$, and $S=(S_1,S_2)$ be a given double upper dense sequence in $[0,1]$. \\Then $A=\left({\bigcup _{\alpha \in S_1}} ~~ _{\alpha} {\mu_A} , {\bigcup _{\beta \in S_2}} ~~ _{\beta} {\nu_A}\right)$.
\end{thm}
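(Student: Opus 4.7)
The plan is to mirror the proof of the First Decomposition Theorem and establish pointwise equality at each $x \in X$, but replace the continuum union over $[0,1]$ by a countable union over $S_1$ and $S_2$, with density doing the work that lexical continuity did before. Fix $x \in X$ and set $a=\mu_A(x)$, $b=\nu_A(x)$; the theorem reduces to showing
\[
\sup_{\alpha \in S_1} {}_{\alpha}\mu_A(x) = a \quad \text{and} \quad \sup_{\beta \in S_2} {}_{\beta}\nu_A(x) = b.
\]
I would write out only the membership case, since the non-membership case is identical with $(S_2,\beta,b,\nu_A)$ in place of $(S_1,\alpha,a,\mu_A)$.

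For the upper bound, I would invoke Definition \ref{dfen1} directly: for each $\alpha \in S_1 \subset (0,1]$, the value ${}_\alpha \mu_A(x)$ equals $\alpha$ when $\alpha \leq a$ and equals $0$ when $\alpha > a$, so in either case ${}_\alpha \mu_A(x) \leq a$. Taking the supremum over $\alpha \in S_1$ then gives $\sup_{\alpha \in S_1} {}_\alpha \mu_A(x) \leq a$.

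For the matching lower bound, if $a=0$ the inequality is trivial. If $a>0$, I would fix $\epsilon \in (0,a]$ and use the remark immediately following Definition \ref{defi}, namely that every upper dense sequence in $(0,1]$ is also lower dense in $(0,1]$. This produces some $\alpha_i \in S_1$ with $a-\epsilon < \alpha_i \leq a$; for this index, ${}_{\alpha_i}\mu_A(x) = \alpha_i > a-\epsilon$, so $\sup_{\alpha \in S_1} {}_\alpha \mu_A(x) \geq a-\epsilon$. Since $\epsilon$ was arbitrary, the supremum is at least $a$, and combining with the upper bound gives equality. Repeating the argument verbatim with $S_2$, $\nu_A$ and $b$ delivers the non-membership identity, and then the First Decomposition Theorem's pointwise-union recipe yields $A = \left({\bigcup_{\alpha \in S_1}} {}_\alpha \mu_A, {\bigcup_{\beta \in S_2}} {}_\beta \nu_A \right)$.

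The only step beyond routine bookkeeping is the density invocation, which the paper has already flagged; I do not anticipate a serious obstacle. The one subtlety worth flagging in the write-up is the corner case $a=0$ (and symmetrically $b=0$), where the requirement $S_1 \subset (0,1]$ means no sequence element satisfies $\alpha_i \leq 0$ and the supremum is vacuously $0$, matching $\mu_A(x)=0$ as needed.
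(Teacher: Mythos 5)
Your proposal is correct and takes essentially the same approach as the paper: a pointwise supremum argument in which the upper bound is immediate and the lower bound comes from approximating $a=\mu_A(x)$ and $b=\nu_A(x)$ from below by elements of $S_1$ and $S_2$ using density. The only cosmetic difference is that the paper gets the lower bound by routing through the Second Decomposition Theorem and applying upper density on each interval $[\alpha,a)$, whereas you run a direct $\epsilon$-argument via the lower-density remark (with $1\in S_1$ covering the case $a=1$); the underlying approximation step is the same.
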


\begin{proof}
Let  $x$ be an arbitrary element in $ X$ and  let $\mu_A (x)=a~\&~ \nu_A (x)=b$.
Since $S_1 \subseteq [0,1] ~\&~ S_2 \subseteq [0,1]$, Thus we have $\bigcup_{\alpha \in S_1}~~ _{\alpha}{\mu_A} \subseteq {\bigcup _{\alpha \in [0,1]}} ~~ _{\alpha} {\mu_A}=\mu_A$ and $ {\bigcup _{\beta \in S_2}} ~~ _{\beta} {\nu_A} \subseteq {\bigcup _{\beta \in [0,1]}} ~~ _{\beta} {\nu_A}=\nu_A $ .\\
So $\left({\bigcup _{\alpha \in S_1}} ~~ _{\alpha} {\mu_A} , {\bigcup _{\beta \in S_2}} ~~ _{\beta} {\nu_A}\right) \subseteq A$ ~~~~~~~~~~~ ........(1) 
\\Now we have to show that $\mu_A(x) \leq {\bigcup _{\alpha \in S_1}} ~~ _{\alpha} {\mu_A}(x) $ and $ \nu_A(x) \leq {\bigcup _{\beta \in S_2}} ~~ _{\beta} {\nu_A}(x)$ to prove the theorem.\\
From second decomposition theorem we know that , for each $x \in X$, $\left(\mu_A(x),\nu_A(x)\right)=\\\left( Sup_{\alpha \in [0,1]}~~{_{\alpha+}{\mu_A}}(x), Sup_{\beta \in [0,1]}~~{_{\beta+}{\nu_A}}(x)\right)=\\ \left( Sup_{\alpha \in [0,a)}~~{_{\alpha+}{\mu_A}}(x), Sup_{\beta \in [0,b)}~~{_{\beta+}{\nu_A}}(x)\right)$.\\For each $\alpha \in [0,a)$, since $S_1$ is upper dense in $(0,1]$. We may find a real number ${K_1} \in S_1$ such that $K_1 \geq \alpha$ , which implies that $K_1 \in [\alpha, a)$ and  ${_{\alpha+}{\mu_A}}(x) < {_{{K_1}}{\mu_A}}(x) \leq Sup _ {{K_1} \in S_1} ~~ _{{K_1}}{\mu_A}(x)$. Thus taking the supremum with respect to $\alpha \in (0, \mu_A(x))$, we obtain $\mu_A (x)=Sup_{\alpha \in (0,a)}~{_{\alpha+}{\mu_A}}(x) \leq Sup _ {{K_1} \in S_1} ~ _{{K_1}}{\mu_A}(x)\\=Sup _ {{\alpha} \in S_1} ~ _{{\alpha}}{\mu_A}(x)$. i.e., $\mu_A(x) \leq {\bigcup _{\alpha \in S_1}} ~~ _{\alpha} {\mu_A}(x)$.\\
Similarly for each $\beta \in [0,b)$, since $S_2$ is upper dense in $[0,1]$. We may find a real number ${K_2} \in S_2$ such that  $K_2 \geq \beta$ , which implies that $K_2 \in [\alpha, b)$ and  ${_{\beta+}{\nu_A}}(x) < {_{{K_2}}{\nu_A}}(x) \leq Sup _ {{K_2} \in S_2} ~~ _{K_2}{\nu_A}(x)$. Thus taking the supremum with respect to $\beta \in (0,\nu_A(x))$, we obtain $\nu_A(x)= Sup_{\beta \in (0,b)} ~~ {_{\beta+}{\nu_A}}(x) \leq \\ Sup _ {{K_2} \in S_2} ~~ _{K_2}{\nu_A}(x)=Sup _ {{\beta} \in S_2}~ _{\beta}{\nu_A}(x) $.\\i.e., $\nu_A(x) \leq {\bigcup _{\beta \in S_2}} ~~ _{\beta} {\nu_A}(x)$.\\ Hence $A(x)=\left(\mu_A(x),\nu_A(x)\right) \subseteq \\ \left({\bigcup _{\alpha \in S_1}} ~ _{\alpha} {\mu_A}(x) , {\bigcup _{\beta \in S_2}} ~ _{\beta} {\nu_A}(x)\right)$.~~~~~~~~~~~~~~~~~~~......(2)\\
(1) and (2) concludes the proof.
\end{proof}
\section{Total Ordering on the set of all Intuitionistic Fuzzy Numbers}
 $~~~~~$The new decomposition theorem established in section 3 identifies an arbitrary intuitionistic fuzzy number by a countably many real-valued parameters. It provides us with a powerful tool for defining total order in the class of IFN, by extending lexicographic ranking relation defined in Geetha et.al \cite{9}.\\
\begin{nt} Let $A $ be an intuitionistic fuzzy number. Let  $S= \left\{ (\alpha_i, \beta_i) | i=1,2,\ldots\right\} \in [0,1]$ be an double upper dense sequence. The $(\alpha_i-\beta_i)$-cut of an IFN $A$ at each $\alpha _ i , \beta_i$ , $i=1,2,...$, is a combination of two closed intervals. Denote this intervals by $([a_i, b_i],[c_i,d_i])$, where $[a_i,b_i]$ is the $\alpha$-cut of the membership function of $A$ and $[c_i,d_i]$ is the $\beta$-cut of the non-membership function of $A$ and \\let $C_{4i-3}= \frac{a_i + b_i - c_i - d_i + a_ic_i + b_id_i}{2}$,\\$C_{4i-2}= \frac{a_i + b_i -c_i - d_i - a_ic_i - b_id_i}{2}$ ,\\$C_{4i-1}= \frac{a_i - b_i - c_i + d_i + a_ic_i + b_id_i}{2}$ ,\\$C_{4i}= \frac{ a_i - b_i + c_i - d_i + a_ic_i - b_id_i}{2}, i=1,2,\ldots $. \label{nt 1}
\end{nt} By fourth decomposition theorem these countably many parameters $\left\{ C_j | j=1,2, \ldots \right\}$ identify the \\intuitionistic fuzzy number. Using these parameters, we define a relation on the set of all intuitionistic fuzzy number as follows.

\begin{defn}(Ranking Principle)
Let $A~and~B$ be any two IFNs. Consider any double upper dense sequence $S=\left\{ (\alpha_i, \beta_i) | i=1,2,\ldots \right\}$ in $[0,1]$, using note \ref{nt 1}, for each $i=1,2...$ we have $C_{4i-3},C_{4i-2},$\\$C_{4i-1},C_{4i}$, which describe a sequences of $C_j(A)$ and $C_j(B)$. If $A \neq B$, then there exists $j$ such that $C_j(A) \neq C_j(B)$ and $C_k(A)=C_k(B)$ for all positive integers $k < j$. The '$<$' relation on the set of of all intuitionistic fuzzy number (IFN) is defined as,\\ $A < B$ if there exists $j$ such that $C_j(A) < C_j(B)$ and $C_k(A)=C_k(B)$ for all positive integers $k < j$. 
\label{rank}
\end{defn}
The above ranking principle on IFNs is  illustrated in the following examples. 
\begin{ex} 
Let\\ $A=\langle (0.20,0.30,0.50),(0.35,0.55,0.65)\rangle$ \\$B=\left\langle (0.17,0.32,0.58),(0.37,0.63,0.73)\right\rangle$ and\\ $C=\left\langle (0.25,0.40,0.70),(0.45,0.75,0.85)\right\rangle$ be three triangular intuitionistic fuzzy numbers (TIFN). \\
The ordering $<$ defined by using double upper dense sequence $S$ given in example \ref{eg} and the way shown in definition \ref{rank} are now adapted. Thus we have ${^{(\alpha - \beta)}}A =([0.2+0.1\alpha, 0.5-0.2\alpha],[0.35+0.2\beta, 0.65-0.1\beta])$, ${^{(\alpha - \beta)}}B=([0.17+0.15\alpha, 0.58-0.26\alpha],[0.37+.26\beta,0.73-0.1\beta])$,${^{(\alpha - \beta)}}C=([0.25+0.15\alpha, 0.70-0.3\alpha],[0.45+0.3\beta, 0.85-0.1\beta])$. For $i=1, (\alpha_1, \beta_1)=(1,1)$, $C_1(A)=-0.85,C_1(B)=-0.1084, C_1(C)=-0.05 $. i.e., $C_1(A)<C_1(B)<C_1(C)$. Hence $A<B<C$.
\end{ex}
\begin{ex}
Let \\$A=\left\langle (0.35,0.35,0.4,0.6),(0.1,0.2,0.3,0.35)\right\rangle$ \\$B=\left\langle (0.35,0.35,0.45,0.55),(0,0.3,0.3,0.35)\right\rangle$ be two TrIFNs. \\
Thel ordering $<$ defined by  using double upper dense sequence $S$ given in example \ref{eg} and the way shown in definition \ref{rank} are now adapted. We have ${^{(\alpha - \beta)}}A =([0.35, 0.6-0.2\alpha],[0.1+0.1\beta, 0.35-0.05\beta])$, ${^{(\alpha - \beta)}} B=([0.35, 0.55-0.1\alpha],[0.3\beta,0.35-0.05\beta])$. For $i=1, (\alpha_1, \beta_1)=(1,1)$,  $C_1(A)=0.22=C_1(B)$, $C_2(A)=-0.03, C_2(B)=0.02$. 
Since $C_2(A)<C_2(B)$. Hence $A>B$. From this example we come to know that $C_1$ alone can not rank any two given IFNs. With the help of $C_2$ we discriminate $A$ and $B$. 
\end{ex}
The following example shows the importance of double upper dense sequence. In the previous examples, different IFNs are ranked by means of $C_1$ and $C_2$. But in general $C_1~and~C_2$ alone need not be enough to rank  the entire class of intuitionistic fuzzy number which is shown in example \ref{exdg}.
\begin{ex}
Let \\$A=\left\langle (0.3,0.35,0.4,0.5),(0.1,0.2,0.25,0.3)\right\rangle$ \\$B=\left\langle (0.35,0.35,0.4,0.55),(0,0.2,0.25,0.35)\right\rangle$ be two TrIFNs. \\
The ordering $<$ defined by  using double upper dense sequence $S$ given in example \ref{eg} and the way shown in definition \ref{rank} are now adapted. We have ${^{(\alpha - \beta)}}A =([0.3+0.05\alpha, 0.5-0.1\alpha],[0.1+0.1\beta, 0.3-0.05\beta])$, ${^{(\alpha - \beta)}} B=([0.35, 0.55-0.15\alpha],[0.2\beta,0.35-0.1\beta])$. For $i=1, (\alpha_1, \beta_1)=(1,1)$, we have $C_1(A)=0.235=C_1(B)$, $C_2(A)=0.065= C_2(B)$, $C_3(A)=0.085= C_3(B)$, $C_4(A)=-0.065= C_4(B)$. Now we have to find for $i=2,(\alpha_2, \beta_2)=(1/2,1/2)$, then $C_5(A)=0.26125 < C_5(B)=0.30125$.
Since $C_5(A)<C_5(B)$. Hence $A<B$.
\label{exdg}
\end{ex}
\begin{thm}
Relation $<$ is a total ordering on the set of all intuitionistic fuzzy number.
\end{thm}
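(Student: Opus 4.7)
The plan is to verify that $<$ satisfies irreflexivity, trichotomy, and transitivity; the induced relation $\leq$ defined by ``$A \leq B$ iff $A < B$ or $A = B$'' is then reflexive, antisymmetric, transitive, and connected---hence a total ordering in the sense used in the paper. The one genuinely analytic step is well-definedness of the ranking rule: showing that whenever $A \neq B$, some coordinate of the sequence $\{C_j\}$ distinguishes them, so that the minimal such index exists by the well-ordering of $\mathbb{N}$.

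For well-definedness I would argue contrapositively. Suppose $C_j(A) = C_j(B)$ for every $j \geq 1$. Fix $i$; by Note \ref{nt 1} the quadruple $(C_{4i-3}, C_{4i-2}, C_{4i-1}, C_{4i})$ is exactly $(L,\, -LG,\, P,\, -IP)$ evaluated at the IVIFN $([a_i, b_i], [c_i, d_i])$ produced by the $(\alpha_i, \beta_i)$-cut. Because the lexicographic order of Geetha et al.\ \cite{9} on these four scores is a total order on the class of IVIFNs, the four scores must jointly separate IVIFNs; hence the $(\alpha_i, \beta_i)$-cuts of $A$ and $B$ coincide for every $i$, i.e.\ $_{\alpha_i}{\mu_A} = {_{\alpha_i}{\mu_B}}$ and $_{\beta_i}{\nu_A} = {_{\beta_i}{\nu_B}}$. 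Applying the Fourth Decomposition Theorem to the double upper dense sequence $S = (S_1, S_2)$ then gives $\mu_A = \mu_B$ and $\nu_A = \nu_B$, so $A = B$, a contradiction.

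With well-definedness in hand, trichotomy and irreflexivity are immediate: at the smallest discriminating index $j^\ast$, exactly one of $C_{j^\ast}(A) < C_{j^\ast}(B)$ or $C_{j^\ast}(B) < C_{j^\ast}(A)$ holds on $\mathbb{R}$, and $A = A$ admits no discriminating index so $A \not< A$. For transitivity, suppose $A < B$ with smallest witness $j_1$ and $B < C$ with smallest witness $j_2$; set $j = \min(j_1, j_2)$. A case split on whether $j_1 < j_2$, $j_1 > j_2$, or $j_1 = j_2$ shows in each case that $C_k(A) = C_k(C)$ for all $k < j$ and $C_j(A) < C_j(C)$ (the tied case using $C_j(A) < C_j(B) < C_j(C)$ directly, the other two using one-sided equalities below the active witness), yielding $A < C$.

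The main obstacle is the separation claim embedded in the well-definedness step, namely the injectivity of the map $(a_i, b_i, c_i, d_i) \mapsto (C_{4i-3}, C_{4i-2}, C_{4i-1}, C_{4i})$ on the admissible region $\{0 \leq a_i \leq b_i,\ 0 \leq c_i \leq d_i,\ b_i + d_i \leq 1\}$. This is not obvious from the algebraic form of the four $C$-values, but it is exactly the content of the IVIFN total ordering result of Geetha et al.\ and can be invoked as a black box. Paired with the Fourth Decomposition Theorem, it is what faithfully reduces an IFN---a priori parameterised by uncountably many real numbers---to a countable real sequence on which lexicographic comparison makes sense, which is the conceptual heart of the construction.
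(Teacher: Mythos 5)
Your proposal is correct and takes essentially the same route as the paper: lexicographic comparison of the countable parameters $C_j$, with the least discriminating index supplied by the well-ordering of the positive integers and transitivity obtained from the minimum of the two witnesses. You are in fact more careful than the paper on the crucial separation step (the paper merely asserts, in its Note, that the $C_j$'s identify an IFN), which you correctly derive from the injectivity of the Geetha et al.\ score quadruple $(L,-LG,P,-IP)$ on cut intervals together with the Fourth Decomposition Theorem.
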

\begin{proof}
Claim: $<$ is total ordering on the set of IFN.\\ To prove $<$ is total ordering we need to show the following (1). $<$ is Partial ordering on the set of IFN\\ (2). Any two elements of the set of IFNs are comparable.\\
(1). $<$ is partial ordering on the set of IFN:\\
(i) $<$ is refelxive:\\
It is very clear that the relation $<$ is reflexive for any $A$.\\
(ii) $<$ is antisymmetric:\\ 
claim: If $A < B$ and $B < A$ then $A = B$.\\
Suppose $A \neq B$, then from the hypothesis $A \prec B$ and $B \prec A$. From the definition \ref{rank}, we can find $j_1$ such that $C_{j_1} (A) < C_{j_1}(B)$ and $C_{j}(A)=C_{j}(B)$ for all positive integers $j<j_1$. Similarly we are able to find $j_2$ such that $C_{j_2} (A) < C_{j_2}(B)$ and $C_{j}(A)=C_{j}(B)$ for all positive integers $j<j_2$. Then $j_1 \& j_2 $ must be the same, let it to be $j_0$. But $C_{j_0}(A)<C_{j_0}(B)$, and $C_{j_0}(B)<C_{j_0}(A)$ this contradicts our hypothesis. Therefore our assumption $A \neq B$ is wrong. Hence $A=B$.\\
(iii) $<$ is transitive:\\
To prove (iii), we have to show that if $A < B$ and $B < C$ then $A < C$\\
Let $A,B,C $ be three IFNs. Let us assume $A < B$ and $B < C$. ....(1)\\
Therefore from $A < B$, we can find a positive integer $k_1$ such that $C_{k_1}(A)< C_{k_1}(B)$ and $C_{k}(A) = C_{k}(B)$ for all positive integer $k<k_1$. Similarly from $B < C$, we can find a positive integer $k_2$ such that $C_{k_2}(B)< C_{k_2}(C)$ and $C_{k}(B) = C_{k}(C)$ for all positive integer $k<k_2$. Now taking $j_0 = min (k_1 , k_2)$, we have $C_{k_0}(A)< C_{k_0}(C)$ and $C_{k}(A) = C_{k}(C)$ for all positive integer $k<k_0$. i.e., $A < C$. Hence $<$ is Transitive.\\ Therefore from (i), (ii), and (iii), we proved the relation $<$ is Partial Ordering on the set of all IFNs. \\
(2). Any two elements of the set of IFNs are comparable.\\
For any two IFNs $A$ and $B$, they are either $A=B$, or $A \neq B$. In the latter case, there are some integers $j$ such that $C_j(A) \neq C_j(B)$. Let $J= \left\{ j | C_j(A) \neq C_j(B) \right\}$. Then $J$ is lower bounded 0 and therefore, according to the well ordering property , $J$ has unique smallest element, denoted by $j_0$. Thus we have $C_j(A)=C_j(B)$ for all positive integers $j< j_0$, and either $C_{j_0}(A) < C_{j_0}(B)$ or $C_{j_0}(A) > C_{j_0}(B)$, that is, either $A \prec B$ or $B \prec A$ in this case. So, for these two IFNs, either $A < B$ or $B < A$. This means that partial ordering $<$ is a total ordering on the set of all intuitionistic fuzzy numbers.
 Hence the proof.
\end{proof}

Similar to the case of total orderings on the real line $(- \infty, + \infty)$ an the total ordering on the sets of special types of IFNs shown in section 3, infinitely many different total orderings on the set of IFNs can be defined. Even using a given upper dense sequence in $[0,1]$, there are still infinitely many different ways to determine a total ordering on the set of IFNs. A notable fact is that each of them is consistent with the natural ordering on the set of all real numbers. This can be regarded as a fundamental requirement for any practice ordering method on the set of all IFNs.

\section{Significance of the proposed method}

$~~~~~$Many researchers have proposed different ranking methods on IFNs, but none of them has covered the entire class of IFNs, and also almost all the methods have disadvantage that at some point of time they ranked two different numbers as the same. In this paper a special type of IFNs which is shown in figure 1 which generalizes IFN more natural in real scenario. Problems in different fields involving qualitative, quantitative and uncertain information can be modelled better using this type of IFNs when compared with usual IFNs.  Our proposed ranking method on this type of IFN will give the better results over other existing methods, and this paper will give the better understanding over this new type of IFNs. This type of IFNs are very much important in real life problems and this paper will give the significant change in the literature. Modeling problems using this type of IFN will give better result. In this subsection our proposed method is compared with the total score function defined in Lakshmana et al.\cite{13}, which is explained here with an illustrative example.

\subsection{Comparision between our proposed method with the score function defined in Lakshmana et al.\cite{13}:}
$~~~~~$In this subsection, our proposed method is compared with the total score function defined in Lakshmana et al.\cite{13}, which is explained here with an illustrative example.
\begin{defn}(Note 1.2\cite{33})\\
The membership score of the triangular intuitionistic fuzzy number (TIFN) $M= \left\{(a,b,c)(e,f,g)\right\}$ is defined by $T(M)=\frac{[1+R(M)-L(M)]}{2}$, where $L(M)=\frac{1-a}{1+b-a}$ and $R(M)=\frac{c}{1+c-b}$.
\label{Nx1}
\end{defn}
\begin{defn}(Lakshmana et al.\cite{13})\\
The new membership score of the triangular \\intuitionistic fuzzy number $M= \left\{(a,b,c)(e,f,g)\right\}$ is defined by $NT(M)=\frac{[1+NL(M)-NR(M)]}{2}$, where $NL(M)=\frac{e}{1+e-f}$ and $NR(M)=\frac{1-g}{1+f-g}$.
\label{nx2}
\end{defn}
\begin{defn}(Lakshmana et al.\cite{13})\\
The nonmembership score of the triangular \\intuitionistic fuzzy number $M= \left\{(a,b,c)(e,f,g)\right\}$ is defined by $NT_c(M)=1-NT(M)$.
\label{nx3}
\end{defn}
\begin{defn}(Lakshmana et al.\cite{13})\\
Let $M=\left\langle (a,b,c),(e,f,g)\right\rangle$ be an triangular \\intuitionistic fuzzy number. If $e \geq b ~and~ f \geq c$, then the score of the intuitionistic fuzzy number $M$ is defined by $(T,NT_c)$, where $T$ is the membership score of $M$ which is obtained from $(a,b,c)$ and $NT_c$ is the nonmembership score of $M$ obtained from $(e,f,g)$.
\label{NTc}
\end{defn}

\begin{defn}(Lakshmana et al.\cite{13})\\ Ranking of Intuitionistic Fuzzy Numbers:\\
Let $M_1=\left\langle (a_1,b_1,c_1),(e_1,f_1,g_1)\right\rangle$ and $M_2=\left\langle (a_2,b_2,c_2),(e_2,f_2,g_2)\right\rangle$ be two triangular intuitionisti\\c fuzzy numbers with $e_i \geq b_i ~\&~ f_i \geq c_i$. Then $M_1 \leq M_2$ if the membership score of $M_1 \leq$ the membership score of $M_2$ and the nonmembership score of $M_1 \geq$ the nonmembership score of $M_2$.
\label{eg4}
\end{defn}
In this example definition \ref{Nx1} to \ref{eg4} are demonstr\\ated and also the illogicality of Lakshmana etal's \cite{13} method is shown.
\begin{ex} Let $M=\left\langle (0,0.2,0.4),(0.4,0.45,.5)\right\rangle$ with ${(0,0.2,0.4)}^c \leq (0.4,0.45,0.5)$ and $0.4 \geq 0.2 ~ \& ~ 0.45 \geq 0.4 $ and $N=\langle (0.25,0.25,0.25),(0.4,\\0.45,0.5)\rangle$ with ${(0.25,0.25,0.25)}^c \geq (0.4,0.45,0.5)$ and $0.4 \geq 0.25 ~ \& ~ 0.45 \geq 0.25$ be the two triangular intuitionistic fuzzy numbers. Using definition \ref{Nx1} we get $L(M)=0.8333$ and $R(M)=0.3333$ which implies $T(M)=0.25$. Applying definition \ref{nx2}, \ref{nx3} \ref{NTc} to $M$ we get $NL(M)=0.421053, NR(M)=0.526316$, $NT(M)=0.447368$ and $NT_c(M)=0.553$. Therefore the total score of membership and nonmembership functions of $M$ are $T=0.25$ and $ NT_c=0.553$ respectively. Then the intuitionistic fuzzy score is represented by $(T,NT_c)=(0.25,0.553)$. Similarly using definitions \ref {Nx1} to \ref{nx3}, we get the total score of membership and nonmembership functions of $N$ are $T=0.25, ~and ~ NT_c=0.553$ respectively.

Therefore from definition \ref{eg4}, this method ranks $M$ and $N$ are equal but $M ~\&~ N$ are different triangular intuitionistic fuzzy numbers.\\
The total ordering $<$ defined by using double upper dense sequence $D_{(\alpha, \beta)}$ given in example \ref{eg} and the way shown in definition \ref{rank} are now adapted. For $i=1 ~and~ (\alpha_1, \beta_1)=(1,1)$, we have $C_1(M)=-0.16 ~and~ C_1(N)=-0.0875$.\\ i.e., $C_1(M)<C_1(N)$. Hence our proposed method ranks $N$ as better one.
\end{ex}
\subsection{Comparision of the proposed method with some existing methods}
$~~~~~$In this sub section significance of our proposed method over some existing methods are explained with examples.   
The table \ref{ta1} shows that our proposed method is significant over the methods presented in \cite{Ch,8,Dh,123,11,Lin,Ll,Li,Jw,25,Ju,Z,Hz} , and it is supported by Geetha et.al \cite{9}. \\For example, let $A=([0.2,0.25],[0.4,0.45])$ and $B=([0.15,0.3],[0.35,0.5])$ be two IVIFNs. Then by applying Lakshmana and Geetha \cite{11} approach we get $LG(A)=LG(B) =\frac{0.45+ \delta (0.70)}{2}$ this implies that $A$ and $B$ are equal which is illogical. By applying the total ordering $<$ defined by using double upper dense sequence $D_{(\alpha, \beta)}$ given in example \ref{eg} and the way shown in definition \ref{rank} are now adapted. For $i=1 ~and~ (\alpha_1, \beta_1)=(1,1)$, we have $C_1(A)=-0.10375 ~and~ C_1(B)=-0.09875$, i.e., $C_1(A)<C_1(B)$. Hence $A<B$ which is supported by Geetha et.al's approach.
 The double upper dense sequence $S$ in example \ref{eg} is used as a necessary reference system for our proposed method in   table 1  (i.e., for $i=1, (\alpha_1,\beta_1)=(1,1)$).

\begin{table*}[htb]
\caption{Significance of proposed method}
\label{ta1}
\centering 
\small
\scalebox{0.53}{
\begin{tabular}{l l l l l} 
\hline
Other exisiting Methods & shortcomings of exisiting methods & Numerical example & Geetha et.al \cite{9} & Proposed Method  \\ \\ \hline 
Xu, Z.S. \cite{25} \\
 $s(A)=\frac{a+b-c-d}{2}$ &  $A=([a_1,b_1],[c_1, d_1])$ , & $A=([0,0.3],[0.35,0.65])$  & $L(A)=-0.2525$ &$C_1(A)=-0.2525,$\\ $h(A)=\frac{a+b+c+d}{2}$ & $B=([a_1- \epsilon,b_1+\epsilon],[c_1-\epsilon, d_1+\epsilon])$ & $ B=([0.1,0.2],[0.45,0.55])$ & L(B)=-0.2725& $C_1(B)=-0.2725$\\    & $s(A)=s(B)=\frac{a_1+b_1-c_1-d_1}{2} ,$ &$s(A)=s(B) =-0.35,$ & $L(A)>L(B) \Rightarrow A>B$ &$ C_1(A)>C_1(B) \Rightarrow A>B$ \\ & $ h(A)=h(B)=\frac{a_1+b_1+c_1+d_1}{2} \Rightarrow A=B$ & $ h(A)=h(B)=0.65 \Rightarrow A=B$ & &  \\ \hline
Dejian Yu,et.al., \cite{8} \\
$S(A)=\frac{2+a+b-c-d}{2}$ &  $A=([a_1,b_1],[c_1, d_1])$ , & $A=([0,0.3],[0.35,0.65])$  & $L(A)=-0.2525$ &$C_1(A)=-0.2525,$\\  & $B=([a_1- \epsilon,b_1+\epsilon],[c_1-\epsilon, d_1+\epsilon])$ & $ B=([0.1,0.2],[0.45,0.55])$ & L(B)=-0.2725& $C_1(B)=-0.2725$\\    & $S(A)=S(B)=\frac{2+a_1+b_1-c_1-d_1}{2} ,$ &$S(A)=S(B) =0.65,$ & $L(A)>L(B) \Rightarrow A>B$ &$ C_1(A)>C_1(B) \Rightarrow A>B$ \\ & $ \Rightarrow A=B$ & $ \Rightarrow A=B$ & &  \\ \hline
	Jun Ye, \cite{123}\\
$M(A)=a+b-1+\frac{c+d}{2}$ &  $A=([a_1,b_1],[c_1, d_1])$ , & $A=([0.1,0.15],[0.25,0.35])$  & $L(A)=-0.13625$ &$C_1(A)=-0.13625,$\\  & $B=([a_1- \epsilon,b_1+\epsilon],[c_1-\epsilon, d_1+\epsilon])$ & $ B=([0.05,0.2],[0.20,0.40])$ & L(B)=-0.13& $C_1(B)=-0.13$\\    & $M(A)=M(B)=a_1+b_1-1+\frac{c_1+d_1}{2} ,$ &$M(A)=M(B) =-0.45,$ & $L(B)>L(A) \Rightarrow B>A$ &$ C_1(B)>C_1(A) \Rightarrow B>A$ \\ & $ \Rightarrow A=B$ & $ \Rightarrow A=B$ & &  \\ \hline
Lakshmana and Geetha \cite{11}\\
$LG(A)=\frac{a+b+ \delta (2-a-b-c-d)}{2}, \forall \delta \in [0,1]$ &  $A=([a_1,b_1],[c_1, d_1])$ , & $A=([0.2,0.25],[0.40,0.45])$  & $L(A)=-0.10375$ &$C_1(A)=-0.10375,$\\  & $B=([a_1- \epsilon,b_1+\epsilon],[c_1-\epsilon, d_1+\epsilon])$ & $ B=([0.15,0.30],[0.35,0.50])$ & L(B)=-0.09875& $C_1(B)=-0.09875$\\    & $LG(A)=LG(B)=\frac{a_1+b_1+ \delta (2-a_1-b_1-c_1-d_1)}{2} ,$ &$LG(A)=LG(B) =\frac{0.45+ \delta (0.70)}{2},$ & $L(B)>L(A) \Rightarrow B>A$ &$ C_1(B)>C_1(A) \Rightarrow B>A$ \\ & $ \Rightarrow A=B$ & $ \Rightarrow A=B$ & &  \\ \hline
Chen and Tan \cite{Ch}\\
$S(A)=\mu-\nu,$ &  $A=(a_1,b_1),B=(a_1-\epsilon, b_1-\epsilon), \forall \epsilon \in (0,1]$ & $A=(0.6,0.2),B=(0.47,0.07)$ & $L(A)=0.52, L(B)=0.4329$ & $C_1(A)=0.52, C_1(B)=0.4329$ \\ 
$~where~A=(\mu,\nu)~with~ \mu+\nu \leq 1$ & $S(A)=S(B)=a_1-b_1 \Rightarrow A=B$ & $S(A)=S(B)=0.4 \Rightarrow A=B$ & $L(A)>L(B) \Rightarrow A>B$ & $C_1(A)>C_1(B) \Rightarrow A>B$ \\ \hline
Hong and Choi \cite{Dh}\\
$H(A)=\mu+\nu,$ &  $A=(a_1,b_1),B=(a_1-\epsilon, b_1+\epsilon), \forall \epsilon \in (0,1]$ & $A=(0.4,0.3),B=(0.6,0.1)$ & $L(A)=0.22, L(B)=0.56$ & $C_1(A)=0.22, C_1(B)=0.56$ \\ 
$~where~A=(\mu,\nu)~with~ \mu+\nu \leq 1$ & $H(A)=H(B)=a_1+b_1 \Rightarrow A=B$ & $H(A)=H(B)=0.7 \Rightarrow A=B$ & $L(A)<L(B) \Rightarrow A<B$ & $C_1(A)<C_1(B) \Rightarrow A<B$ \\ \hline
Liu \cite{Li}\\
$S_{Li}(A)=\mu+\mu(1-\mu-\nu),$ &  $A=(0,b_1),B=(0, b_2), ~where~ b_1<b_2$ & $A=(0,0.1),B=(0,0.7)$ & $L(A)=-0.1, L(B)=-0.7$ & $C_1(A)=-0.1, C_1(B)=-0.7$ \\ 
$~where~A=(\mu,\nu)~with~ \mu+\nu \leq 1$ & $S_Li(A)=S_Li(B)=0 \Rightarrow A=B$ & $S_{Li}(A)=S_Li(B)=0 \Rightarrow A=B$ & $L(A)>L(B) \Rightarrow A>B$ & $C_1(A)>C_1(B) \Rightarrow A>B$ \\ \hline
Zhou and Wu \cite{Z}\\
$S_{Zh}(A)=\mu-\nu+(\alpha-\beta)(1-\mu-\nu),$ &  $A=(a_1,b_1),B=(a_1-\epsilon, b_1-\epsilon), \forall \epsilon \in (0,1] \& \alpha=\beta$ & $A=(0.6,0.1),B=(0.5,0)$ & $L(A)=0.56, L(B)=0.5$ & $C_1(A)=0.56, C_1(B)=0.5$ \\ 
$~where~A=(\mu,\nu), \alpha,\beta \in[0,1]~with~ \alpha+\beta \leq 1$ & $S_{Zh}(A)=S_{Zh}(B)=a_1-b_1 \Rightarrow A=B$ & $S_{Zh}(A)=S_{Zh}(B)=0.5 \Rightarrow A=B$ & $L(A)>L(B) \Rightarrow A>B$ & $C_1(A)>C_1(B) \Rightarrow A>B$ \\ \hline
Lin etal \cite{Lin}\\
$S_{Lin}(A)=2\mu+\nu-1,$ &  $A=(a_1,b_1),B=(a_1+\frac{\epsilon}{2}, b_1-\epsilon), \forall \epsilon \in (0,1] $ & $A=(0.25,0.1),B=(0.3,0)$ & $L(A)=0.175, L(B)=0.3$ & $C_1(A)=0.175, C_1(B)=0.3$ \\ 
$~where~A=(\mu,\nu)~with~ \mu+\nu \leq 1$ & $S_{Lin}(A)=S_{Lin}(B)=2a_1+b_1-1 \Rightarrow A=B$ & $S_{Lin}(A)=S_{Lin}(B)=-0.4 \Rightarrow A=B$ & $L(A)<L(B) \Rightarrow A<B$ & $C_1(A)<C_1(B) \Rightarrow A<B$ \\ \hline
Wang etal \cite{Jw}\\
$S_{W}(A)=\mu-\nu-\frac{(1-\mu-\nu)}{2},$ &  $A=(a_1,b_1),B=(a_1+\frac{\epsilon}{3}, b_1+\epsilon), \forall \epsilon \in (0,1] $ & $A=(0.34,0.2),B=(0.44,0.5)$ & $L(A)=0.208, L(B)=0.16$ & $C_1(A)=0.208, C_1(B)=0.16$ \\ 
$~where~A=(\mu,\nu)~with~ \mu+\nu \leq 1$ & $S_{W}(A)=S_{W}(B)=\frac{3a_1-b_1-1}{2} \Rightarrow A=B$ & $S_{W}(A)=S_{W}(B)=-0.09 \Rightarrow A=B$ & $L(A)>L(B) \Rightarrow A>B$ & $C_1(A)>C_1(B) \Rightarrow A>B$ \\ \hline
L.Lin etal \cite{Ll}\\
$S_{L}(A)=\frac{\mu}{2}+\frac{3}{2}(\mu+\nu)-1,$ &  $A=(a_1,b_1),B=(a_1+\frac{\epsilon}{2}, b_1-\frac{2\epsilon}{3}), \forall \epsilon \in (0,1]$ & $A=(0.52,0.31),B=(0.62,0.377)$ & $L(A)=0.3712, L(B)=0.47674$ & $C_1(A)=0.3712, C_1(B)=0.47674$ \\ 
$~where~A=(\mu,\nu)~with~ \mu+\nu \leq 1$ & $S_{L}(A)=S_{L}(B)=\frac{4a_1+3b_1-2}{2} \Rightarrow A=B$ & $S_{L}(A)=S_{L}(B)=0.505 \Rightarrow A=B$ & $L(A)<L(B) \Rightarrow A<B$ & $C_1(A)<C_1(B) \Rightarrow A<B$ \\ \hline
Ye \cite{Ju}\\
$S_{Y}(A)=\mu(2-\mu-\nu)+(1-\mu-\nu)^2,$ & -  & $A=(0.3,0.6),B=(0.3,0.5)$ & $L(A)=-0.12, L(B)=-0.05$ & $C_1(A)=-0.12, C_1(B)=-0.05$ \\ 
$~where~A=(\mu,\nu)~with~ \mu+\nu \leq 1$ &  & $S_{Y}(A)=S_{Y}(B)=0.32 \Rightarrow A=B$ & $L(A)<L(B) \Rightarrow A<B$ & $C_1(A)<C_1(B) \Rightarrow A<B$ \\ \hline
Zhang and Yu \cite{Hz} \\
 $K(A_i,A^-)=\frac{\frac{1}{2}(c_i+d_i)}{\sqrt{\frac{{a_i}^2+{b_i}^2+{c_i}^2+{d_i}^2}{2}}}$ &  $A=([a_1,a_1],[a_1, a_1])$ , & $A=([0.2,0.2],[0.2,0.2])$  & $L(A)=0.04$ &$C_1(A)=0.04,$\\ $K(A_i,A^+)=\frac{\frac{1}{2}(a_i+b_i)}{\sqrt{\frac{{a_i}^2+{b_i}^2+{c_i}^2+{d_i}^2}{2}}}$ & $B=([b_1,b_1],[b_1, b_1]) ~with~ a_1>b_1$ & $ B=([0.3,0.3],[0.3,0.3])$ & $L(B)=0.09 $ & $C_1(B)=0.09$\\   Where $A^-=([0,0],[1,1]),$ & $K(A,A^-)=K(B,A^-)=\frac{1}{\sqrt{2}},$ &$K(A,A^-)=K(B,A^-)=\frac{1}{\sqrt{2}},$ & $L(A)<L(B) \Rightarrow A<B$ &$ C_1(A)<C_1(B) \Rightarrow A<B$ 
\\ $A^+=([1,1],[0,0])$ & $ K(A,A^+)=K(B,A^+)=\frac{1}{\sqrt{2}} \Rightarrow A=B$ & $ K(A,A^+)=K(B,A^+)=\frac{1}{\sqrt{2}} \Rightarrow A=B$ & &  \\ \hline
	 
\end{tabular}} 
\label{t4}
\end{table*} 

\subsection{Trapezoidal Intuitionistic Fuzzy Information System(TrIFIS)}
$~~~~~$Information system (IS) is a decision model used to select the best alternative from the all the alternatives in hand under various attributes. The data collected from the experts may be incomplete or imprecise numerical quantities. To deal with such data the thoery of IFS provided by Atanassov \cite{1} aids better. In information system, dominance relation rely on ranking of data, ranking of intuitionistic fuzzy numbers is inevitable.

\begin{defn}
An Information System \\$S=(U,AT,V,f)$ with $V=\cup _ {a \in AT } V_a$ where $V_a$ is a domain of attribute $a$ is called trapezoidal intuitionistic fuzzy information system  if $V$ is a set of TrIFN. We denote $f(x,a) \in V_a$ by $f(x,a)=\left\langle (a_1,a_2,a_3,a_4),({a_1}{'},{a_2}{'},{a_3}{'},{a_4}{'})\right\rangle$ where $a_i,{a_i}^{'} \in [0,1] $.
\end{defn}
The numerical illustration is given in example \ref{eg2}.
\begin{defn}
An TrIFIS, $S=(U,AT,V,f)$ together with weights $W = \{w_a / a \in AT \}$ is called  Weighted  Trapezoidal Intuitionistic Fuzzy Information System (WTrIFIS) and is denoted by \\$S=(U,AT,V,f, W)$.
\end{defn}
\begin{defn}
Let $a \in AT$ be a criterion. Let $x,y \in U$.  If $f(x,a)>f(y,a)$ (as per definition \ref{rank}) then $x>_a y$ which indicates that $x$ is better than (outranks) $y$ with respect to the criterion $a$.  Also $x=_a y$ means that $x$ is equally good as $y$ with respect to the criterion $a$, if $f(x,a)=f(y,a)$.
\end{defn}

\begin{defn}
 Let $S=(U,AT,V,f, W)$ be an WTrIFIS and $A\subseteq AT$. \\Let ${B_A}(x,y)=\left\{a \in A | x>_a y\right\}$ and let ${C_A}(x,y)=\left\{ a \in A | x=_ay \right\}$. The weighted fuzzy dominance relati-\\on ${WR_A}(x,y):U \times U \rightarrow [0,1]$ is defined by ${WR_A}(x,y)=\sum_{a \in {B_A}(x,y)} w_a + \frac{\sum_{a \in{C_A}(x,y)} w_a}{2}$.
\label{df2}
\end{defn}
\begin{defn} Let $S=(U,AT,V,f, W)$ be an WTrIFIS and $A\subseteq AT$. The entire dominance degree of each object is defined as \\${WR_A}(x_i)=\frac{1}{\left|U\right|} \sum ^{\left|U\right|}_{j=1}{WR_A}(x_i,y_j)$
\end{defn}
\subsection{Algorithm for Ranking of objects in WTrIFIS}
Let $S=(U,AT,V,f, W)$ be an WTrIFIS. The objects in $U$ are ranked using following algorithm.\\ \\
\textbf{Algorithm:5.3 \label{alg}}\\
1. Using  Definition \ref{rank} find $C_j's$ accordingly, to decide whether  ${x_i}~ {>_a}~{x_i} ~or~{x_j}~{>_a} ~{x_i} ~or~ {x_i}~ {=_a}~ {x_j}$ for all $a \in A (A \subseteq AT)$ and for all $x_i,x_j \in U$.\\
2. Enumerate $B_A(x_i,x_j)$ using \\${B_A}(x_i,x_j)=\left\{a \in A | x_i>_a x_j\right\}$ and $C_A(x_i,x_j)$ using ${C_A}(x_i,x_j)= \left\{ a \in A | x_i  =_a x_j \right\}$.\\
3.Calculate the weighted fuzzy dominance relation using ${WR_A}(x,y):U \times U \rightarrow [0,1]$ defined by ${WR_A}(x_i,x_j)=\sum_{a \in {B_A}(x_i,x_j)} w_a + \frac{\sum_{a \in{C_A}(x_i,x_j)} w_a}{2}$.\\
4. Calculate the entire dominance degree of each object using ${WR_A}(x_i)=\frac{1}{\left|U\right|} \sum ^{\left|U\right|}_{j=1}{WR_A}(x_i,x_j)$.\\
5. The objects are ranked using entire dominance degree. The larger the value of ${WR_A}(x_i)$, the better is the object.
\subsection{Numerical Illustration}
In this subsection, Algorithm \ref{alg} is illustrated by an example \ref{eg2}. 
\begin{ex}In this example we consider a selection problem of the best supplier for an automobile company from the available alternatives $\left\{x_i | i=1~to~10\right\}$ of pre evaluated 10 suppliers, based on WTrIFIS with attributes $\left\{a_j|j= 1 ~to~5\right\}$ as product quality, relation-\\ship closeness, delivery performance, social responsib-\\ility and legal issue.\\ 
An TrIFIS with $U=\left\{x_1,x_2,...,x_{10}\right\}$, $AT=\{a_1,a_2,...\\,a_5\}$ is given in Table \ref{t1}, and weights for the each attribute $W_{a}$ is given by $W=\left\{w_{a} | a \in AT\right\}=\left\{0.3, 0.2, 0.15, 0.17, 0.18 \right\} $.

\label{eg2}
\end{ex}

\begin{table*}[ht]
\caption{WTrIFIS to evaluate alternatives with respect to criteria} 
\centering 
\scalebox{0.9}{

\tiny

\begin{tabular}{c c c c c c } 
\hline 
$ $ & $a_1$ & $a_2$ & $a_3$ & $a_4$ & $a_5$ \\\hline  
$x_1$ & $\left\langle 0.2,0.4\right\rangle$ & $\left\langle [0,0.2],[0.2,0.3]\right\rangle$ & $\langle(0.1,0.2,0.3,0.4)$, & $\langle (0.2,0.2,0.3,0.3),$ & $\left\langle [0.2,0.4],[0,0.2]\right\rangle$ \\
 &  & & $(0.3, 0.4,0.5,0.6)\rangle$ & $(0.4,0.6,0.7,0.8)\rangle$ & \\ \\
 
$x_2$ & $\langle (0.1,0.1,0.15,0.2),$ & $\langle (0.1,0.2,0.3,0.3),$ &$\left\langle [0.2,0.4],0.6 \right\rangle$ & $\left\langle 0.4,0.4 \right\rangle$ &  $\left\langle [0.4,0.6],[0,0.2]\right\rangle$ \\  \\
 & $(0.16,0.23,0.37,0.5)\rangle$ &  $( 0.35,0.45,0.50,0.70)\rangle$ &  &  & \\ \\ \\

$x_3$ & $[0,0.2],[0.2,0.3]$ & $\left\langle[0.2,0.4],0.2 \right\rangle$ & $\langle(0.1,0.3,0.4,0.5)$,  & $\left\langle 0.4,[0.2,0.4]\right\rangle$ & $\left\langle 0.2,[0.4,0.8]\right\rangle$\\ 
 & & & $(0.45, 0.60,0.60,0.75)\rangle$ &  & \\  \\
$x_4$ & $\left\langle 0, [0,0.6] \right\rangle$ &  $\langle (0,0.2,0.3,0.35),$ & $\langle (0.1,0.2,0.3,0.4)$, &$\langle (0.1,0.2,0.3,0.4)$, &$\left\langle 0.2,0.6\right\rangle$   \\ 
& &$( 0.30,0.45,0.50,0.60)\rangle$ &$ (0.35,0.45,0.55,0.65)\rangle$ &$(0.3, 0.4,0.5,0.6)\rangle $  & \\ \\

$x_5$ & $\langle (0,0.1,0.15,0.25 ),$ &  $\left\langle [0.4,0.6],[0.2,0.4] \right\rangle$ & $\langle (0.25,0.30,0.35,0.45 ),$ & $\left\langle [0.2,0.4],[0.4,0.6] \right\rangle$ & $\left\langle [0.2,0.4],0.2\right\rangle$  \\ 
&$(0.20,0.26,0.37,0.60 )\rangle $ &  &$( 0.35,0.45,0.50,0.60 )\rangle$ &   &  \\ \\

$x_6$ & $\left\langle [0.2,0.4],[0,0.2]\right\rangle$  &$\langle (0.10,0.10,0.15,0.20),$ & $\left\langle [0.2,0.6],[0,0.2]\right\rangle$ & $\langle (0,0.20,0.30,0.42),$ & $\left\langle 0.7,[0,0.4]\right\rangle$  \\ & &$(0.16,0.23,0.37,0.50)\rangle$ & & $(0.50,0.60,0.70,0.90)\rangle$ & \\  \\

$x_7$ & $\left\langle 0.2,0.7 \right\rangle$ & $\left\langle 0.2,0.6 \right\rangle$ & $\langle (0.20,0.30,0.40,0.60)$, & $\left\langle 0.2,0.7 \right\rangle$ & $\left\langle 0.4,0.2 \right\rangle$ \\
 & & & $(0.50,0.60,0.60,0.90)\rangle$ &  & \\  \\
  
$x_8$ & $\left\langle [0.2,0.4],[0,0.1] \right\rangle$ & $\left\langle [0.4,0.6],[0,0.2]\right\rangle$ & $\langle (0,0.30,0.35,0.40 ),$ & $ \left\langle 0.3,0.2 \right\rangle$ & $\langle (0,0.2,0.3,0.3)$, \\ 
& & &$( 0.40,0.45,0.50,0.55) \rangle$ &  & $(0.35,0.37,0.42,0.50)\rangle$ \\ \\

$x_9$ &$\left\langle [0.2,0.4],[0.4,0.6]\right\rangle$ & $\left\langle 0.4,0.4 \right\rangle$ & $\langle (0.05,0.21,0.34,0.35),$ & $\left\langle [0.3,0.4],0.2\right\rangle$ &  $\left\langle 0.6, 0.2 \right\rangle$ \\ & & &$( 0.37,0.52,0.63,0.79 )\rangle$ &  & \\ \\

$x_{10}$ & $\left\langle 0.7,[0.2,0.6]\right\rangle$ & $\left\langle 0.3,0.6 \right\rangle$ & $\langle (0.13,0.21,0.34,0.45 ),$ & $\left\langle 0.4,0.2\right\rangle$ & $\langle (0.1,0.2,0.3,0.35)$, \\ 
& & & $( 0.45,0.52,0.63,0.85) \rangle$&  & $(0.37,0.37,0.42,0.45)\rangle$ \\ \\
 \hline
\end{tabular} }

\label{t1}
\end{table*} 
In the table 2, $f(x_i, a_i) = (a_1, a_2, a_3, a_4),( c_1,c_2,c_3\\,c_4)$ denotes the trapezoidal intuitionistic fuzzy numbers which include intuitionistic fuzzy values and interval valued intuitionstic fuzzy numbers and they stand for the evaluation of alternative $x_i$ under the criteria $a_i$ with acceptance of $(a_1, a_2, a_3, a_4)$ and nonacceptance of $( c_1,c_2,c_3,c_4)$. For example, $f(x_1, a_1)$ denotes supplier $x_1$ is evaluated  under the criteria 'product quality' $(a_1)$ with "$20 $\%  of  acceptance and $40$\% of non acceptance" , $f(x_5, a_3)$ denotes the supplier $x_5$  is evaluated  under the criteria 'delivery performance $(a_3)$' with "around $30$\% to $35$\% of  accepatance and around $45$\%  to $50$\% of non acceptance" and  $f(x_9, a_1)$ denotes the supplier $x_9$  is evaluated  under the criteria 'product quality' $(a_1)$ with " $20$\% to 40\% of  acceptance and $40$\%  to $60$\% of non acceptance".\\
Step:1
For $i=1, (\alpha,\beta)=(1,1)$:  By step 1,  $C_1(f(x_i,a_j))$ using defintion \ref{rank} and note \ref{nt 1}, for all $a_i \in AT$ and for all $x_i \in U$ is found and tabulated in table \ref{t2}. If $C_1(f(x_i,a_j))=C_1(f(x_j,a_j))$ for any alternatives $x_i,x_j$ then $C_2$ and other necessary score functions ($C_3 ~and~ C_4$) are found wherever required. The bold letters are used in table \ref{t2} to represent the equality of scores. From table \ref{t2} we observe that in many places $C_j's$ are not distiguishable for different IFNs.  

\begin{table}[htb]
\caption{$C_{1},C_{2},C_{3},~and~C_{4}$  for $i=1, (\alpha,\beta)=(1,1)$} 

\centering 
\small  
 
\scalebox{0.422}{
\begin{tabular}{l l l l l l l l l l l l} 
\hline 
$ $ & $a_1$ & $a_2$ & $a_3$ & $a_4$ & $a_5$ & $ $ & $a_1$ & $a_2$ & $a_3$ & $a_4$ & $a_5$ \\ \\ \hline
$C_1$ & & & &  & & $C_2$ \\ \\
$x_1$ & $\textbf{-0.12}$ & $\textbf{-0.12}$ & $-0.085$ & $\textbf{-0.235}$ & $+0.24$ & $$ & $-0.28$ & $-0.18$ & $$ & $\textbf{-0.565}$ & $$ \\ \\
$x_2$ & $\textbf{-0.14925}$ & $\textbf{-0.105}$ & $-0.12$ & $\textbf{+0.16}$ & $+0.46$ & $$ & $\textbf{-0.23075}$ & $\textbf{-0.345}$ & $$ & $-0.16$ & $$ \\ \\
$x_3$ & $\textbf{-0.12}$ & $\textbf{+0.16}$ & $\textbf{-0.04}$ & $\textbf{+0.22}$ & $\textbf{-0.28}$ & $$ & $-0.18$ & $+0.04$ & $\textbf{-0.46}$ & $-0.02$ & $\textbf{-0.52}$\\ \\
$x_4$ & $-0.3$ & $\textbf{-0.105}$ & $-0.1225$ & $-0.085$ & $\textbf{-0.28}$ & $$ & $$ & $\textbf{-0.345}$ & $$ & $$ & $\textbf{-0.52}$\\ \\
$x_5$ & $\textbf{-0.14925}$ & $+0.36$ & $\textbf{+0.005}$ & $-0.04$ & $+0.16$ & $$ & $\textbf{-0.23075}$ & $$ & $\textbf{-0.305}$ & $$ & $$\\ \\
$x_6$ & $+0.24$ & $-0.13575$ & $+0.36$ & $\textbf{-0.235}$ & $+0.64$ & $$ & $$ & $$ & $$ & $\textbf{-0.565}$ & $$\\ \\
$x_7$ & $-0.36$ & $-0.28$ & $\textbf{-0.04}$ & $-0.36$ & $+0.28$ & $$ & $$ & $$ & $\textbf{-0.46}$ & $$ & $$\\ \\
$x_8$ & $+0.27$ & $+0.46$ & $\textbf{+0.005}$ & $\textbf{+0.16}$ & $\textbf{-0.045}$ & $$ & $$ & $$ & $\textbf{-0.305}$ & $+0.04$ & $\textbf{-0.245}$\\ \\
$x_9$ & $-0.04$ & $\textbf{+0.16}$ & $\textbf{-0.1383}$ & $\textbf{+0.22}$ & $+0.52$ & $$ & $$ & $-0.16$ & $\textbf{-0.4617}$ & $+0.08$ & $$\\ \\
$x_{10}$ & $0.58$ & $\textbf{-0.12}$ & $\textbf{-0.1383}$ & $+0.28$ & $\textbf{-0.045}$ & $$ & $$ & $-0.48$ & $\textbf{-0.4617}$ & $$ & $\textbf{-0.245}$\\ \\
$C_3$ & $$ & $$ & $$ & $$ & $$ & $C_4$ & $$ & $$ & $$ & $$ & $$   \\ \\
$x_1$ & $$ & $$ & $$ & $\textbf{+0.165}$ & $$  & $$ & $$ & $$ & $$ & $\textbf{-0.145}$ & $$\\ \\
$x_2$ & $\textbf{+0.07075}$ & $\textbf{-0.095}$ & $$ & $$ & $$ & $$ & $\textbf{-0.09475}$ & $\textbf{-0.105}$ & $$ & $$ & $$\\ \\
$x_3$ & $$ & $$ & $\textbf{+0.16}$ & $$ & $+0.32$ & $$ & $$ & $$ & $\textbf{-0.08}$ & $$ & $$\\ \\
$x_4$ & $$ & $\textbf{-0.095}$ & $$ & $$ & $+0.12$ & $$ & $$ & $\textbf{-0.105}$ & $$ & $$ & $$\\ \\
$x_5$ & $\textbf{+0.07075}$ & $$ & $\textbf{+0.155}$ & $$ & $$ & $$ & $\textbf{-0.09475}$ & $$ & $\textbf{-0.07}$ & $$ & $$\\ \\
$x_6$ & $$ & $$ & $$ & $\textbf{+0.165}$ & $$ & $$ & $$ & $$ & $$ & $\textbf{-0.0145}$ & $$\\ \\
$x_7$ & $$ & $$ & $\textbf{+0.16}$ & $$ & $$ & $$ & $$ & $$ & $\textbf{-0.08}$ & $$ & $$\\ \\
$x_8$ & $$ & $$ & $\textbf{+0.155}$ & $$ & $\textbf{+0.075}$ & $$ & $$ & $$ & $\textbf{-0.07}$ & $$ & $\textbf{-0.101}$\\ \\
$x_9$ & $$ & $$ & $\textbf{+0.1517}$ & $$ & $$ & $$ & $$ & $$ & $\textbf{-0.1725}$ & $$ & $$\\ \\
$x_{10}$ & $$ & $$ & $\textbf{+0.1517}$ & $$ & $\textbf{+0.075}$ & $$ & $$ & $$ & $\textbf{-0.1725}$ & $$ & $\textbf{-0.101}$ \\ \\

\hline
\end{tabular} }
\label{t2}
\end{table}

Hence from table \ref{t2} we do not get the best alternative. Therefore the same procedure which is explained in step 1 is repeated for $i=2, (\alpha,\beta)=(1/2,1/2)$ and it is shown in table \ref{t21}.

\begin{table}[htb]
\caption{$C_{5},C_{6},C_{7},~and~C_{8}$ for $i=2, (\alpha,\beta)=(1/2,1/2)$} 
\centering 
\small

\scalebox{0.48}{\begin{tabular}{l l l l l l l l l l l l} 
\hline 
$ $ & $a_1$ & $a_2$ & $a_3$ & $a_4$ & $a_5$ & $ $ & $a_1$ & $a_2$ & $a_3$ & $a_4$ & $a_5$ \\ \\ \hline
$C_5$ & & & &  & & $C_6$ \\ \\
$x_1$ & $\textbf{-0.12}$ & $\textbf{-0.12}$ & $-0.0775$ & $-0.2125$ & $+0.24$ & $$ & $-0.28$ & $-0.18$ & $$ & $$ & $$ \\ \\
$x_2$ & $-0.13644$ & $-0.155$ & $-0.12$ & $\textbf{+0.16}$ & $+0.46$ & $$ & $$ & $$ & $$ & $-0.16$ & $$ \\ \\
$x_3$ & $\textbf{-0.12}$ & $\textbf{+0.16}$ & $-0.07063$ & $\textbf{+0.22}$ & $\textbf{-0.28}$ & $$ & $-0.18$ & $+0.04$ & $$ & $-0.02$ & $\textbf{-0.52}$\\ \\
$x_4$ & $-0.3$ & $-0.14188$ & $-0.115$ & $-0.0775$ & $\textbf{-0.28}$ & $$ & $$ & $$ & $$ & $$ & $\textbf{-0.52}$\\ \\
$x_5$ & $-0.17825$ & $+0.36$ & $+0.0275$ & $-0.04$ & $+0.16$ & $$ & $$ & $$ & $$ & $$ & $$\\ \\
$x_6$ & $+0.24$ & $-0.12969$ & $+0.36$ & $-0.2735$ & $+0.64$ & $$ & $$ & $$ & $$ & $$ & $$\\ \\
$x_7$ & $-0.36$ & $-0.28$ & $-0.01875$ & $-0.36$ & $+0.28$ & $$ & $$ & $$ & $$ & $$ & $$\\ \\
$x_8$ & $+0.27$ & $+0.46$ & $-0.08219$ & $\textbf{+0.16}$ & $-0.123$ & $$ & $$ & $$ & $$ & $+0.04$ & $$\\ \\
$x_9$ & $-0.04$ & $\textbf{+0.16}$ & $-0.1886$ & $\textbf{+0.22}$ & $+0.52$ & $$ & $$ & $-0.16$ & $$ & $+0.08$ & $$\\ \\
$x_{10}$ & $+0.58$ & $\textbf{-0.12}$ & $-0.14263$ & $+0.28$ & $-0.06656$ & $$ & $$ & $-0.48$ & $$ & $$ & $$\\ \\
$C_7$ & $$ & $$ & $$ & $$ & $$ & $C_8$ & $$ & $$ & $$ & $$ & $$   \\ \\
$x_1$ & $$ & $$ & $$ & $$ & $$  & $$ & $$ & $$ & $$ & $$ & $$\\ \\
$x_2$ & $$ & $$ & $$ & $$ & $$ & $$ & $$ & $$ & $$ & $$ & $$\\ \\
$x_3$ & $$ & $$ & $$ & $$ & $+0.32$ & $$ & $$ & $$ & $$ & $$ & $$\\ \\
$x_4$ & $$ & $$ & $$ & $$ & $+0.12$ & $$ & $$ & $$ & $$ & $$ & $$\\ \\
$x_5$ & $$ & $$ & $$ & $$ & $$ & $$ & $$ & $$ & $$ & $$ & $$\\ \\
$x_6$ & $$ & $$ & $$ & $$ & $$ & $$ & $$ & $$ & $$ & $$ & $$\\ \\
$x_7$ & $$ & $$ & $$ & $$ & $$ & $$ & $$ & $$ & $$ & $$ & $$\\ \\
$x_8$ & $$ & $$ & $$ & $$ & $$ & $$ & $$ & $$ & $$ & $$ & $$\\ \\
$x_9$ & $$ & $$ & $$ & $$ & $$ & $$ & $$ & $$ & $$ & $$ & $$\\ \\
$x_{10}$ & $$ & $$ & $$ & $$ & $$ & $$ & $$ & $$ & $$ & $$ & $$ \\ \\
\hline
\end{tabular} }
\label{t21}
\end{table}
The weighted fuzzy dominance relation using \\${WR_A}(x,y) = \sum_{a \in {B_A}(x,y)} w_a + \frac{\sum_{a \in{C_A}(x,y)} w_a}{2}$ is calculated and is tabulated in table \ref{t3}. For example, ${B_A}(x_1,x_2)=\left\{a_1,a_2,a_3\right\}$ and ${C_A}(x_1,x_2)=\left\{\right\}$ and hence ${WR_A}(x_1,x_2)=0.3+0.2+0.15=0.65$.
 \begin{table}[htb]
\caption{Weighted Fuzzy Dominance relation between two alternatives ${WR_A}(x,y)$} 
\centering 
\scalebox{0.66}{
\begin{tabular}{l l l l l l l l l l l } 
\hline
${WR_A}(x,y)$ & $x_1$ & $x_2$ & $x_3$ & $x_4$ & $x_5$ & $x_6$ & $x_7$ & $x_8$ & $x_9$ & $x_{10}$ \\ \\ \hline
$x_1$ & $0.50$ & $0.65$ & $0.18$ & $0.83$ & $0.48$ & $0.37$ & $0.67$ & $0.33$ & $0.15$ & $0.53$ \\ \\
$x_2$ & $0.35$ & $0.50$ & $0.18$ & $0.65$ & $0.65$ & $0.17$ & $0.85$ & $0.18$ & $0.15$ & $0.33$ \\ \\
$x_3$ & $0.82$ & $0.82$ & $0.50$ & $1$ & $0.47$ & $0.37$ & $0.67$ & $0.32$ & $0.35$ & $0.35$ \\ \\
$x_4$ & $0.17$ & $0.35$ & $0$ & $0.50$ & $0$ & $0.17$ & $0.67$ & $0$ & $0.15$ & $0.15$ \\ \\
$x_5$ & $0.52$ & $0.35$ & $0.53$ & $1$ & $0.50$ & $0.37$ & $0.82$ & $0.33$ & $0.35$ & $0.53$ \\ \\
$x_6$ & $0.63$ & $0.83$ & $0.63$ & $0.83$ & $0.63$ & $0.50$ & $1$ & $0.33$ & $0.63$ & $0.33$ \\ \\
$x_7$ & $0.33$ & $0.15$ & $0.33$ & $0.33$ & $0.18$ & $0$ & $0.5$ & $0.33$ & $0.15$ & $0.33$ \\ \\
$x_8$ & $0.67$ & $0.82$ & $0.68$ & $1$ & $0.67$ & $0.67$ & $0.67$ & $0.50$ & $0.65$ & $0.35$ \\ \\
$x_9$ & $0.85$ & $0.85$ & $0.65$ & $0.85$ & $0.65$ & $0.37$ & $0.85$ & $0.35$ & $0.50$ & $0.38$ \\ \\
$x_{10}$ & $0.47$ & $0.67$ & $0.65$ & $0.85$ & $0.47$ & $0.67$ & $0.67$ & $0.65$ & $0.62$ & $0.50$ \\ \\
\hline
\end{tabular} }
\label{t3}
\end{table}
Now the entire dominance degree of each object using ${WR_A}(x_i)=\frac{1}{\left|U\right|} \sum ^{\left|U\right|}_{j=1}{WR_A}(x_i,y_j)$ is found by definiton \ref{df2}.\\For example, ${WR_A}(x_1)=\frac{1}{10} \sum ^{10}_{j=1}{WR_A}(x_1,x_j)=0.469$.\\
 So by step 5, $x_8$ is selected as the best object from the weighted trapezoidal intuitionistic fuzzy information system is seen from table \ref{t4}.
\begin{table}[htb]
\caption{Total Dominance degree ${R_A}(x_i)$}  
\centering 
\scalebox{0.6}{
\begin{tabular}{l l l l l l l l l l l } 
\hline
$X_i$ & $x_1$ & $x_2$ & $x_3$ & $x_4$ & $x_5$ & $x_6$ & $x_7$ & $x_8$ & $x_9$ & $x_{10}$ \\ \\
${R_A}(x_i)$ & $0.469$ & $0.401$ & $0.567$ & $0.216$ & $0.530$ & $0.634$ & $0.263$ & $0.668$ & $0.630$ & $0.622$ \\
\hline
\end{tabular} }
\label{t4}
\end{table}   
\section{Conclusion}$~~~~~$In this paper, the total ordering on the set of all IFNs is achieved. The total orderings introduced and discussed in this paper are consistent with the natural ordering of real numbers. Actually this total ordering on IFNs generalises the total ordering on FNs defined by Wei Wang, Zhenyuan Wang \cite{34}. Therefore this is an appropriate generalization of the total ordering on the set of all real numbers to the set of IFNs. This method can order intuitionistic fuzzy numbers, either alone or as a supplementary means with other ranking methods, and may be adopted in decision making with fuzzy information.\\
\textbf{Acknowledgement}\\
The Authors thank the valuable suggestions from anonymous referees for the betterment of the quality of the paper.

\end{document}